\newtheorem{theorem}{Theorem}[section]
\newtheorem{lemma}[theorem]{Lemma}
\newtheorem{proposition}[theorem]{Proposition}
\newtheorem{claim}[theorem]{Claim}
\newtheorem{definition}[theorem]{Definition}
\newcommand{\floor}[1]{\left\lfloor#1\right\rfloor}
\newcommand{\ceiling}[1]{\left\lceil#1\right\rceil}
\newcommand{\half}{\frac{1}{2}}
\title{A note on bipartite graph tiling}
\author{Andrzej Czygrinow\thanks{Research of this author is supported in part by NSA grant H98230-08-1-0046}, Louis DeBiasio \\School of Mathematical and Statistical Sciences\\Arizona State University\\Tempe, AZ 85287}
\begin{document}
\maketitle

\begin{abstract}
Bipartite graph tiling was studied by Zhao \cite{Z} who gave the best possible minimum degree conditions for a balanced bipartite graph on $2ms$ vertices to contain $m$ vertex disjoint copies of $K_{s,s}$. Let $s<t$ be fixed positive integers. Hladk\'y and Schacht \cite{HS} gave minimum degree conditions for a balanced bipartite graph on $2m(s+t)$ vertices to contain $m$ vertex disjoint copies of $K_{s,t}$.  Their results were best possible, except in the case when $m$ is odd and $t> 2s+1$.  We give the best possible minimum degree condition in this case.
\end{abstract}

\section{Introduction}

If $G$ is a graph on $n=sm$ vertices, $H$ is a graph on $s$ vertices and $G$ contains $m$ vertex disjoint copies of $H$, then we say $G$ can be \emph{tiled} with $H$.  In this language, we state the seminal result of Hajnal and Szemer\'edi.

\begin{theorem}[Hajnal-Szemer\'edi \cite{HSz}]\label{hsz}
Let $G$ be a graph on $n=sm$ vertices.  If $\delta(G)\geq (s-1)m$, then $G$ can be tiled with $K_s$.
\end{theorem}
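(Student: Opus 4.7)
The plan is to proceed by induction on $m$, with the base case $m=1$ being immediate since any graph on $s$ vertices with $\delta\geq s-1$ is $K_s$. For the inductive step, the naive attempt is to locate a single $K_s$ in $G$, delete it, and apply induction to the remaining graph on $s(m-1)$ vertices. However, deleting $s$ vertices can drop the minimum degree by $s$, giving at best $(s-1)m - s = (s-1)(m-1)-1$, which is one short of what the hypothesis for the smaller instance requires. So a straightforward induction fails, and we instead argue by contradiction via a packing-exchange technique.

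Assume $G$ is a minimum counterexample, and fix a maximum collection $\mathcal{P}=\{C_1,\ldots,C_p\}$ of vertex-disjoint copies of $K_s$. Let $R = V(G)\setminus\bigcup_i V(C_i)$ be the set of uncovered vertices, so $|R|\geq s$ and $R$ contains no copy of $K_s$. The strategy is to use the minimum degree hypothesis either to enlarge $\mathcal{P}$ directly (contradicting maximality) or to modify $\mathcal{P}$ so as to increase the number of edges spanned by the uncovered set $R$. Since any such modification does not create new uncovered vertices, after finitely many improvements $R$ must induce a $K_s$, again contradicting maximality.

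The basic exchange operation: for a vertex $r\in R$ and a clique $C_i\in\mathcal{P}$, if $r$ has $s-1$ neighbors in $C_i$, then replacing the unique non-neighbor $v_i$ by $r$ yields a new $K_s$-packing with $v_i$ moved to $R$. The degree condition $\delta(G)\ge (s-1)m$ guarantees by pigeonhole that for each $r\in R$ there are "many" cliques $C_i$ with at least $s-1$ neighbors of $r$. One then selects exchanges that strictly increase a carefully chosen progress quantity (e.g., the number of edges inside $R$, or a weighted count capturing how close $R$ is to containing a $K_s$), driving the argument to a contradiction.

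The main obstacle — and the heart of the Hajnal--Szemer\'edi argument — is that single one-step swaps cannot in general be made to monotonically increase any natural progress quantity: replacing $v_i\in C_i$ by $r\in R$ may fail to add edges inside $R$. The resolution requires multi-step cascades of exchanges organized along an auxiliary augmenting-structure over $\mathcal{P}$, where a vertex newly displaced into $R$ triggers another swap on a different clique, and so on. Handling the extremal configurations in which a vertex realizes equality in $\delta(G)\geq (s-1)m$ and its non-neighbors align structurally with the cliques of $\mathcal{P}$ is the most delicate part, and is where the full strength of the hypothesis is used.
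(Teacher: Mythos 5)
This theorem is not proved in the paper at all: it is the classical Hajnal--Szemer\'edi theorem, quoted as background with a citation to \cite{HSz}, so there is no paper proof to compare against. Judged on its own, your proposal is not a proof but an outline that explicitly defers the two steps that constitute the actual content of the theorem. You set up a maximum $K_s$-packing $\mathcal{P}$ with uncovered set $R$, describe the one-vertex swap, and then state that one should ``select exchanges that strictly increase a carefully chosen progress quantity'' and that the real difficulty lies in ``multi-step cascades of exchanges organized along an auxiliary augmenting-structure'' together with the extremal configurations. Naming these difficulties is not the same as resolving them: the auxiliary structure is never defined, no invariant is exhibited that is shown to strictly increase under a well-defined multi-step operation, and no argument is given that the process terminates in a configuration contradicting maximality. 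This is precisely the part of the Hajnal--Szemer\'edi argument (or of the later Kierstead--Kostochka proof via near-equitable colorings) that requires many pages of careful case analysis.

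Two of the intermediate assertions are also unsupported as stated. First, the pigeonhole claim that every $r\in R$ sees $s-1$ vertices of ``many'' cliques of $\mathcal{P}$ needs a count: $r$ may have up to $|R|-1$ neighbours inside $R$, and the bound $\deg(r)\geq (s-1)m$ spread over the cliques only forces cliques with $s-1$ neighbours of $r$ after a computation that depends on the size of the packing; you never do this computation. Second, the termination claim ``after finitely many improvements $R$ must induce a $K_s$'' does not follow from monotonically increasing $e(R)$: a graph on $|R|\geq s$ vertices can have many edges and still be $K_s$-free (Tur\'an graphs), so increasing the edge count inside $R$ does not by itself drive $R$ toward containing a $K_s$ unless the degree hypothesis is brought to bear on the structure of $R$ --- which is again the omitted core of the argument. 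As it stands, the proposal is a plausible road map that a reader could not turn into a proof without independently reconstructing the substance of \cite{HSz}.
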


For tiling with general $H$, results of Alon and Yuster \cite{Alon} and Koml\'{o}s, S\'{a}rk\"{o}zy, and  Szemer\'edi \cite{KSS} gave sufficient conditions on the minimum degree of a graph $G$ such that $G$ can be tiled with $H$.  Specifically, in \cite{KSS}, it is shown that if $G$ is a graph on $n$ vertices with minimum degree at least $\left (1 - 1/\chi(H)\right)n +K$ for a constant $K$ that only depends on $H$, then $G$ can be tiled with $H$.  A more delicate minimum degree condition that involves the so-called critical chromatic number of $H$ was conjectured by Koml\'os and solved by Shokoufandeh and Zhao \cite{Zh}.  Finally, K\"{u}hn and Osthus \cite{KO} determined exactly when the critical chromatic number or chromatic number is the appropriate parameter and thus settled the problem (for large graphs).

In this paper we study the tiling problem in bipartite graphs.  Denote a bipartite graph $G$ with partition sets $U$ and $V$ by $G[U, V]$.  We say $G[U,V]$ is \emph{balanced} if $|U|=|V|$.  Zhao proved the following Hajnal-Szemer\'edi type result for bipartite graphs.

\begin{theorem}[Zhao \cite{Z}]
For each $s\geq 2$, there exists $m_0$ such that the following holds for all $m\geq m_0$.  If $G$ is a balanced bipartite graph on $2n=2ms$ vertices with 
$$\delta(G)\geq \left\lbrace \begin{array}{ll} \frac{n}{2}+s-1   & \text{ if } m \text{ is even } \\
              \frac {n+3s}{2}-2 & \text{ if } m \text{ is odd, } \end{array} \right. $$
then $G$ can be tiled with $K_{s,s}$.
\end{theorem}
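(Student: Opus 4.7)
My plan is to combine the regularity--blow-up method with an extremal case analysis, which is the standard route for bipartite tiling problems of this type. Before starting the proof proper, I would pin down the tight extremal constructions so that I know what "close to extremal" must mean. For even $m$, the sharpness example is a near-balanced split $U=U_1\cup U_2$, $V=V_1\cup V_2$ with $|U_i|=|V_i|=n/2$ and sparse bipartite pieces across $(U_1,V_2)$ and $(U_2,V_1)$, so that almost every $K_{s,s}$ copy lives inside one of $(U_1,V_1),(U_2,V_2)$. For odd $m$ no clean half/half split exists, so at least one $K_{s,s}$ copy must straddle the partition, and the extra $s$ term in $(n+3s)/2-2$ turns out to be precisely the cost of building such a crossing copy.

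In the \emph{non-extremal} case I would apply Szemer\'edi's regularity lemma in its bipartite form to obtain equal-sized clusters $U_1,\dots,U_k\subset U$ and $V_1,\dots,V_k\subset V$. Let $R$ be the reduced bipartite graph whose edges correspond to $\ep$-regular pairs of density at least some $d\gg\ep$; a standard cleaning shows $\delta(R)\ge(1/2+o(1))k$. Assuming $G$ is not close to an extremal example, $R$ inherits a similar non-extremal property and admits a perfect fractional $K_{s,s}$-tiling, which a short rounding step converts into an almost-perfect integer $K_{s,s}$-tiling of $R$. Invoking the Koml\'os--S\'ark\"ozy--Szemer\'edi blow-up lemma on each super-edge produces a $K_{s,s}$-tiling of $G$ covering all but $o(n)$ vertices, and the leftovers are inserted greedily using the degree slack (or by a small absorbing set set aside earlier).

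In the \emph{extremal} case I would assume $G$ admits a near-partition $U=U_1\cup U_2$, $V=V_1\cup V_2$ with $|U_i|,|V_i|\approx n/2$ and sparse edges across $(U_1,V_2)\cup(U_2,V_1)$ (the other extremal pattern is symmetric). Because $\delta(G)$ is large, almost every vertex has at most a constant number of non-neighbours in its own block, so I can greedily build $K_{s,s}$ copies inside $(U_1,V_1)$ and $(U_2,V_2)$ while reserving a small buffer to correct imbalances. When $m$ is even the imbalance can be made zero and $n/2+s-1$ suffices; when $m$ is odd, at least one $K_{s,s}$ copy must cross the partition, and the $+3s/2-2$ slack is used to locate a vertex in $U_i$ together with $s-1$ vertices of $V_i$ and $s$ vertices on the opposite side that form the necessary crossing $K_{s,s}$ while keeping the residual counts compatible with tiling inside the blocks.

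The main obstacle will be the extremal case with odd $m$, where the bound is so tight that any sloppiness loses. There one has to coordinate the choice of the crossing $K_{s,s}$ with the removal of low-degree vertices, the correction of small size differences between $|U_i|$ and $|V_i|$, and the requirement that what remains can still be $K_{s,s}$-tiled inside each block. Matching these counts so that the slack $+3s/2-2$ is shown to be both necessary and sufficient is the technically delicate piece, and I expect it to require either an absorbing lemma tailored to $K_{s,s}$ or a careful two-stage rebalancing argument that first moves a bounded number of vertices across the partition and then applies the even-$m$ argument to the residue.
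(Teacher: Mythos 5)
First, note that the paper you are commenting on does not prove this statement at all: it is Zhao's theorem, quoted from \cite{Z}, and the present paper only uses it (and the companion stability theorem, Theorem \ref{stability}) as a black box. So the relevant comparison is with Zhao's proof, whose shape is indeed the one you describe: a stability result proved by the regularity method (either $G$ is $K_{r,r}$-tileable or it is extremal), followed by a bare-hands analysis of the extremal case. Your non-extremal outline (regularity, reduced graph, fractional tiling plus blow-up, greedy absorption of leftovers) is a reasonable sketch of that first half.

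The genuine gap is that you have deferred exactly the part that constitutes the theorem. The content of Zhao's result is the precise constants $\frac{n}{2}+s-1$ and $\frac{n+3s}{2}-2$, and these are decided entirely in the extremal case; your treatment of it is a plan (``an absorbing lemma tailored to $K_{s,s}$ or a careful two-stage rebalancing argument'') rather than an argument. Moreover, one of the few concrete assertions you do make there is wrong as stated: from $\delta(G)\approx n/2$ and $d(U_1,V_2)\le\alpha$ you cannot conclude that almost every vertex has only a constant number of non-neighbours in its own block. The density condition only controls averages, so one must set aside exceptional sets (the analogues of $U_0,V_0$ in Section \ref{preprocess} of this paper, of size up to $\alpha^{2/3}n$) consisting of vertices with substantial degree to both sides, and these are handled by a separate mechanism (greedy incorporation into copies of $K_{s+t,s+t}$, resp.\ $K_{s,s}$, using their large degree to both blocks). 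Likewise, the rebalancing when $m$ is odd is not just ``locate one crossing copy'': one must move star-centers across the sparse pairs (via statements like Lemma \ref{lem:Zhao} and Claims \ref{TildeStars}--\ref{K_1} in this paper, or their $K_{s,s}$ analogues in \cite{Z}) so that both blocks end up with cardinalities divisible by $s$ and still tileable, and it is precisely this bookkeeping that consumes the extra $\frac{3s}{2}-2$ of degree. Until that case analysis is carried out, the proposal is a roadmap consistent with the known proof, not a proof.
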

Zhao proved that this minimum degree condition was tight.

\begin{proposition}[Zhao \cite{Z}]
Let $s\geq 2$, and $n=ms\geq 64s^2$.  There exists a balanced bipartite graph, $G$, on $2n$ vertices with
$$\delta(G)= \left\lbrace \begin{array}{ll} \frac{n}{2}+s-2   & \text{ if } m \text{ is even } \\
              \frac {n+3s}{2}-3 & \text{ if } m \text{ is odd } \end{array} \right. $$
such that $G$ cannot be tiled with $K_{s,s}$.
\end{proposition}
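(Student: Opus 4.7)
The plan is to construct, for each parity of $m$, an explicit balanced bipartite graph with the claimed minimum degree that contains no $K_{s,s}$-tiling, in the spirit of the standard ``space'' and ``divisibility'' barriers in extremal graph theory. Each construction starts from $K_{n,n}$ and deletes a carefully chosen set of edges with two jobs in mind: trim the minimum degree down to exactly the stated target, and force every $K_{s,s}$-copy of $G$ to conform to a coarse bipartition of each side in such a way that the tile counts become incompatible with the part sizes.

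For $m$ even, I would partition $U=U_1\cup U_2$ and $V=V_1\cup V_2$ with $|U_1|=|V_2|=n/2+s-1$ and $|U_2|=|V_1|=n/2-s+1$, delete every edge between $U_2$ and $V_1$, and then remove a small additional set of edges from $U_2$ to $V_2$ so that every vertex of $U_2$ has degree exactly $n/2+s-2$. A $K_{s,s}$-subgraph meeting $U_2$ must then have its entire $V$-side inside $V_2$, and symmetrically a $K_{s,s}$ meeting $V_1$ has its entire $U$-side inside $U_1$. In an alleged tiling I would classify the $m$ tiles according to these two properties and count how many vertices of $U_2$ and $V_1$ each class can absorb; the sizes are chosen so that the resulting linear system on the tile counts is inconsistent, yielding the desired contradiction.

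For $m$ odd, a parallel construction is needed, and it requires one more unit of slack, which is reflected in the extra additive term $\approx s$ in the statement. I would adjust the part sizes so that the oddness of $m$ produces a parity obstruction: the number $t_A$ of tiles covering $U_2$ and the number $t_B$ of tiles covering $V_1$ are each forced to be at least $(m+1)/2$, contradicting $t_A+t_B\le m$. The larger imbalance between the two parts requires more edges to be deleted, which is consistent with the higher minimum-degree threshold.

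The main obstacle is calibration. A naive two-part construction without the auxiliary edge deletions leaves the minimum degree one too high, while deleting too many edges either overshoots the target below $n/2+s-2$ or opens up ``cross'' $K_{s,s}$-copies that mix the sides in ways evading the partition constraint; a rough construction for small $s$ and $m$ can in fact be tiled, so the counting has to be pinned down exactly. Verifying non-tileability then reduces to a short case analysis based on the precise distribution of $U_2$- and $V_1$-vertices among the tiles, which is mechanical once the right sizes and deletions are in hand.
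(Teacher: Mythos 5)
There is a genuine gap: the construction you describe for even $m$ is not a counterexample, because it can be tiled. With $U_2$--$V_1$ empty, $U_1$ complete to $V_1\cup V_2$, and each vertex of $U_2$ missing exactly one vertex of $V_2$ (say the deleted edges form a matching), pick any $S\subseteq V_2$ with $|S|=s-1$ and any $U_1'\subseteq U_1$ with $|U_1'|=n/2$. Then $G[U_1',V_1\cup S]$ is complete bipartite with both sides of size $n/2$ divisible by $s$, hence tileable; and $G[U_2\cup(U_1\setminus U_1'),V_2\setminus S]$ is complete bipartite on two sides of size $n/2$ minus a graph of maximum degree one on the $U_2$-side, which is also tileable (group the $U$-side into $s$-sets and assign $V$-blocks by a cyclic shift so no vertex meets its unique non-neighbor). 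The same sharing of the surplus $s-1$ vertices of $U_1$ and $V_2$ defeats any calibration of this "one empty cross pair plus trimming" scheme: the minimum degree forces $|U_1|,|V_2|\geq n/2+s-2$, and then the two classes of tiles (those meeting $V_1$, whose $U$-side lies in $U_1$, and those meeting $U_2$, whose $V$-side lies in $V_2$) can always split $m/2$--$m/2$ and borrow the surplus to restore divisibility, exactly as above. Your counting system is therefore consistent, not inconsistent, and the parity claim sketched for odd $m$ ($t_A,t_B\geq(m+1)/2$) is never actually derived from the construction.

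The missing idea is the one this paper itself uses in Section \ref{lower bound} for the analogous Proposition \ref{counterexample} (the $K_{s,s}$ statement you were asked about is quoted from Zhao \cite{Z}, who argues the same way): keep all four parts of size close to $n/2$, with an imbalance chosen so that a divisibility/parity obstruction survives, make the diagonal pairs $G[U_i,V_i]$ complete, and put on the cross pairs not an empty or nearly complete graph but a sparse $(s-1)$-regular, $K_{2,2}$-free graph $P(\cdot,s-1)$ (Lemma \ref{No K22}); in the odd case the extra additive $\approx s$ in the degree comes from small exceptional sets of size $s-1$ joined completely to the opposite large parts, not from deleting more edges. The regular cross graphs raise every degree by $s-1$ while, since $s\geq 2$, the $K_{2,2}$-freeness and the degree bound $s-1$ make crossing copies of $K_{s,s}$ impossible (the analogue of \eqref{s-1}, \eqref{K_{2,2}} and Claims \ref{claim:properties}--\ref{types}), so every tile is confined to a diagonal block and the size imbalance cannot be repaired. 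Without this ingredient the minimum-degree requirement and the non-tileability requirement pull in opposite directions, which is exactly why your construction collapses.
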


Hladk\'{y} and Schacht extended Zhao's result as follows.

\begin{theorem}[Hladk\'{y}-Schacht \cite{HS}]\label{HS ub}
Let $1\leq s<t$ be fixed integers.  There exists $m_0$ such that the following holds for all $m\geq m_0$. If $G$ is a balanced bipartite graph on $2n=2m(s+t)$ vertices with $$\delta(G)\geq \left\lbrace \begin{array}{ll} \frac{n}{2}+s-1   & \text{ if } m \text{ is even } \\
               \frac {n+t+s}{2}-1 & \text{ if } m \text{ is odd,} \end{array} \right. $$
then $G$ can be tiled with $K_{s,t}$.
\end{theorem}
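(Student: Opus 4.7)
The plan is to use the absorbing-method plus regularity lemma framework standard for extremal tiling problems. The key initial observation is a \emph{balance constraint}: if a perfect $K_{s,t}$-tiling of $G$ uses $a$ copies with $s$-part in $U$ and $2m-a$ with $s$-part in $V$, then $as + (2m-a)t = |U| = n = m(s+t)$ forces $a = m$. Thus exactly $m$ copies must use each orientation of $K_{s,t}$ relative to the bipartition, and this rigidity is what drives the even/odd split in the threshold.

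I would split the proof into extremal and non-extremal cases. Call $G$ $\beta$-\emph{extremal} if each class admits a bipartition $U = U_1 \cup U_2$, $V = V_1 \cup V_2$ with $|U_i|,|V_i| \approx n/2$ for which $e(G[U_1,V_1])$ and $e(G[U_2,V_2])$ are atypically small, mimicking the tight constructions. In the non-extremal case, first \emph{(i)} reserve a linear-sized absorbing set $A \subseteq V(G)$ with the property that for every small balanced $L \subseteq V(G)\setminus A$ satisfying the appropriate divisibility conditions, $G[A \cup L]$ has a perfect $K_{s,t}$-tiling; such an $A$ is built by showing that most $(s+t)$-tuples admit many ``absorbers'' and then sampling randomly. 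Then \emph{(ii)} apply Szemer\'edi's regularity lemma to $G \setminus A$, obtaining an $\epsilon$-regular bipartite reduced graph $R$ inheriting essentially the same minimum-degree fraction. In $R$, \emph{(iii)} find a fractional $K_{s,t}$-tiling via a Hall-type or LP argument, enforcing that in weighted form exactly half the copies lie in each orientation; \emph{(iv)} lift via the blow-up lemma to a near-perfect tiling of $G \setminus A$; and \emph{(v)} absorb the $o(n)$ leftover using $A$.

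In the extremal case, use the structural description directly: the sparseness of $G[U_i,V_i]$ forces almost all tile copies across $G[U_1,V_2]$ or $G[U_2,V_1]$, and one uses the surplus of $\delta(G)$ over $n/2$ to move $O(s+t)$ vertices between the four parts so that each dense half becomes exactly tileable with copies in the appropriate orientation, respecting the $a=m$ split globally.

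The main obstacle I expect is the parity issue when $m$ is odd: then $a = m$ is itself odd, so the copies in the two orientations cannot split evenly across the two halves of an extremal bipartition, and the tight construction for odd $m$ is built around exactly this obstruction. Verifying that $\delta(G) \geq (n+s+t)/2 - 1$ suffices to overcome it---both at the adjustment step in the extremal case and when enforcing the global balance within the fractional tiling of $R$---is the delicate part of the argument, and is precisely where the even/odd gap in the threshold enters.
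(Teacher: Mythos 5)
This paper does not prove Theorem \ref{HS ub} at all---it is quoted from Hladk\'y and Schacht---but both their argument and the proof of the closely analogous Theorem \ref{main} given here take a much lighter route than yours: they invoke Zhao's stability theorem (Theorem \ref{stability}) as a black box with $r=s+t$, so that either $G$ has a $K_{s+t,s+t}$-tiling---in which case each copy splits into two copies of $K_{s,t}$, one of each orientation, and your balance constraint $a=m$ holds automatically---or $G$ is extremal, and all of the real work goes into the extremal configuration. Your plan to rebuild the non-extremal case from scratch with an absorbing set, the regularity lemma, a balanced fractional tiling of the reduced graph and the blow-up lemma is plausible but redundant, and your step \emph{(iii)}, enforcing the orientation balance inside the reduced graph, is exactly the complication that the $K_{s+t,s+t}$ trick sidesteps.

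The genuine gap is the extremal case. Your entire treatment of it is one sentence (``move $O(s+t)$ vertices so that each dense half becomes exactly tileable''), and you yourself flag the odd-$m$ parity obstruction as ``the delicate part'' without resolving it---but that is precisely where the exact constants $s-1$ and $\frac{t+s}{2}-1$ live; it is the whole content of the theorem beyond the easy threshold $\frac{n}{2}+o(n)$. Moreover ``exactly tileable'' is not a mere divisibility condition: when $m$ is odd the two dense halves cannot both have order divisible by $s+t$, so some copies must straddle the partition or absorb the exceptional vertices, and one must show that the stated degree surplus guarantees the required crossing structures. In the analogous argument of this paper that is the content of Claims \ref{TildeStars}--\ref{K_1} and the case analysis of Section \ref{Extremal}: finding many disjoint $s$-stars between $U_1$ and $V_2$ whose centers can be migrated, building the special copies $K^1,K^2$ through $U_0$ and $V_0$, and arranging each half to have sizes of the form $\ell(s+t)+as+bt$ and $\ell(s+t)+at+bs$ so that the local tilings respect the global orientation count. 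Without carrying out that analysis at the exact bound $\frac{n+t+s}{2}-1$, your proposal yields at best an asymptotic version of the statement, not the theorem as claimed.
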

They proved that this minimum degree condition was tight in all cases except when $m$ is odd and $t> 2s+1$.  Note that since we are dealing with balanced bipartite graphs, in any tiling of $G[U,V]$ with $K_{s,t}$ there must be an equal number of copies of $K_{s,t}$ with $s$ vertices in $U$ as copies of $K_{s,t}$ with $t$ vertices in $U$.  This explains why the authors \cite{HS} suppose $2n=2m(s+t)$ instead of $2n=m(s+t)$.  

\begin{proposition}[Hladk\'{y}-Schacht \cite{HS}]
Let $1\leq s<t$ be fixed integers.  There exists $m_0$ such that the following holds for all $m\geq m_0$. There exists a balanced bipartite graph, $G$, on $2n=2m(s+t)$ vertices with
$$\delta(G)= \left\lbrace \begin{array}{ll} \frac{n}{2}+s-2   & \text{ if } m \text{ is even } \\
              \frac {n+t+s}{2}-2 & \text{ if } m \text{ is odd and } t\leq 2s+1 \end{array} \right. $$
such that $G$ cannot be tiled with $K_{s,t}$.
\end{proposition}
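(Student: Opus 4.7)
My plan is to exhibit two explicit extremal bipartite graphs, one for each parity of $m$, and rule out a $K_{s,t}$-tiling in each by a combination of vertex-counting and a divisibility argument.

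For $m$ even I would adapt Zhao's space-barrier construction to the asymmetric pair $K_{s,t}$. Partition $U=U_1\cup U_2$ and $V=V_1\cup V_2$ with $|U_1|=|V_2|=n/2-s+2$ and $|U_2|=|V_1|=n/2+s-2$, and let $E(G)$ consist of every bipartite pair except those in $U_1\times V_2$. A direct four-case computation gives $\delta(G)=n/2+s-2$. Since $U_1$ and $V_2$ are non-adjacent, every copy of $K_{s,t}$ in $G$ has either its $U$-part contained in $U_2$ or its $V$-part contained in $V_1$. I would classify the $2m$ copies as (I) meeting $U_1$ (forcing the $V$-part inside $V_1$), (II) meeting $V_2$ (forcing the $U$-part inside $U_2$), and (III) lying in $U_2\cup V_1$. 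Writing the vertex-count equations for $|U_1|,|V_2|,|U_2|,|V_1|$ together with the balance constraint that exactly $m$ copies have their $s$-part in $U$ reduces the tileability question to a short linear-Diophantine system whose non-solvability I expect to follow from the offset $-s+2$ in $|U_1|$ together with $m$ being even.

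For $m$ odd with $t\le 2s+1$ I would use a divisibility barrier. Take $|U_1|=|V_1|=(n+s+t)/2-1$ and $|U_2|=|V_2|=(n-s-t)/2+1$, make $U_i\times V_i$ complete for $i=1,2$, and add a carefully chosen sparse bipartite structure between $U_1\times V_2$ and $U_2\times V_1$ (a bi-regular subgraph giving each small-side vertex exactly $s+t-3$ extra cross-neighbours) to raise the small-side minimum degree to $(n+s+t)/2-2$. The two blocks contain $(m+1)(s+t)-2$ and $(m-1)(s+t)+2$ vertices respectively, so any pure (block-internal) $K_{s,t}$-tiling leaves exactly $s+t$ uncovered vertices in total across the two blocks, and this defect has to be absorbed by exactly one crossing $K_{s,t}$-copy using the sparse cross edges. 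The hypothesis $t\le 2s+1$ is used here via a common-neighbourhood count in the cross subgraph: I expect it to rule out every possible vertex-distribution of such a crossing copy over the four parts.

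The main obstacle is the odd-$m$ case. The partition sizes must be tuned so that $\delta(G)$ equals the threshold exactly, and, more subtly, the sparse cross bipartite subgraph has to be designed so that it contains no $K_{s,t}$ of the specific ``defect-correcting'' shape dictated by the leftover vertices in the two blocks. Identifying the exact shape of that forbidden crossing copy, and showing that it exists in the cross subgraph only when $t>2s+1$, is the delicate technical step; once this is done, the Diophantine bookkeeping in both parities is routine.
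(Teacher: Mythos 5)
Both of your constructions can in fact be tiled, so neither one proves the proposition. (Note also that this statement is cited from \cite{HS} rather than proved in the paper; the template to follow is the proof of Proposition \ref{counterexample} in Section \ref{lower bound}.) For $m$ even, a pure space barrier cannot realize the bound $\frac n2+s-2$: in your graph every vertex of $U_2$ is complete to $V$ and every vertex of $V_1$ is complete to $U$, and $|U_1|=|V_2|\le \frac n2$, so there is no shortage of $V_1$-vertices to serve the copies that meet $U_1$ (nor of $U_2$-vertices for those meeting $V_2$), and the linear system you describe \emph{is} solvable. The case $s=2$ makes this vivid: there $|U_1|=|V_1|=|U_2|=|V_2|=\frac n2=\frac m2(s+t)$, and one tiles the complete graphs $G[U_1,V_1]$ and $G[U_2,V_2]$ separately with $\frac m2$ copies of each orientation. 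The true extremal example at this degree is a \emph{divisibility} barrier: two blocks of sizes $\frac n2\pm1$ with complete interiors, joined by $(s-1)$-regular, $K_{2,2}$-free cross graphs (the graphs $P(\cdot,s-1)$ of Lemma \ref{No K22}); neither block is internally tileable, and the cross graphs are too sparse to admit any crossing copy.

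Your odd-$m$ construction has the right shape (a divisibility defect of $\pm(s+t)\mp2$ forcing crossing copies) but the cross-degree $s+t-3$ destroys it. Since $s+t-3\ge s$ whenever $t\ge 3$, a vertex $u\in U_2$ has $s$ neighbours $S\subseteq V_1$, and because $G[U_1,V_1]$ is complete, $\{u\}$ together with any $t-1$ vertices of $U_1$ and the set $S$ is a crossing $K_{s,t}$ with exactly one vertex in the small block --- and this exists no matter how cleverly the cross graph is designed, so your ``delicate technical step'' is not merely delicate but impossible with these parameters. Taking one such copy anchored in $U_2$ and one anchored in $V_2$ removes $2(s+t)-2$ vertices from the large block and $2$ from the small one, after which each block is a complete balanced bipartite graph with both sides equal to $\frac{m-1}{2}(s+t)$, which tiles trivially; the orientation counts also balance. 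The repair is the same as in the even case: the cross graphs must have maximum degree $s-1$ (and be $K_{2,2}$-free), so that no vertex can see $s$ vertices across the barrier, and the remaining degree must be supplied elsewhere --- in Proposition \ref{counterexample} by the exceptional sets $U_0,V_0$ of size $s-1$ joined completely across, and in the Hladk\'y--Schacht construction by the choice of block sizes. Finally, your argument never actually uses $t\le 2s+1$, which is a structural warning sign: for $t\ge 2s+2$ we have $\frac{n+t+s}{2}-2\ge\frac{n+3s}{2}-1$, so by Theorem \ref{main} \emph{no} construction at this degree can work, and a correct proof must visibly break at exactly that point.
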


Our objective is to give the tight minimum degree condition in the final remaining case, when $m$ is odd and $t> 2s+1$. We will do this in two parts.  First in Section \ref{Extremal} we prove that when $m$ is odd and $t\geq 2s+1$, the following minimum degree condition is sufficient.

\begin{theorem}\label{main}
Let $1\leq s<t$ be fixed integers with $2s+1\leq t$.  There exists $m_0$ such that the following holds for all odd $m$ with $m\geq m_0$. If $G$ is a balanced bipartite graph on $2n=2m(s+t)$ vertices with $$\delta(G)\geq \frac {n+3s}{2}-1,$$ then $G$ can be tiled with $K_{s,t}$.
\end{theorem}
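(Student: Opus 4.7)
The plan is to follow the stability / extremal-case dichotomy that has become standard for sharp tiling thresholds. Fix a small constant $\alpha = \alpha(s,t) > 0$ and declare $G[U,V]$ to be \emph{$\alpha$-extremal} if it admits a structure mirroring the Hladk\'y--Schacht odd-$m$ extremal example; that is, $V(G)$ decomposes as two nearly disjoint ``halves'' $H_1, H_2$ with almost all edges concentrated inside the halves. The proof then splits into the non-extremal and extremal cases.

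In the \emph{non-extremal case} I would follow the regularity-plus-absorbing framework of Hladk\'y and Schacht to show that the weaker bound $\delta(G) \geq n/2 + s - 1 + o(n)$ --- the even-$m$ threshold from Theorem~\ref{HS ub} --- already suffices to tile $G$ with $K_{s,t}$. The extra additive term in the odd-$m$ case of Theorem~\ref{HS ub} is forced only by the parity obstruction coming from a near-balanced sparse cut in $G$; when $G$ is $\alpha$-far from extremal no such cut exists, so a flow / linear-programming argument on the reduced graph produces a perfect fractional $K_{s,t}$-tiling, which can then be converted into an integer tiling via an absorbing device. I would record this as a lemma and cite \cite{HS} for most of the details.

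The \emph{extremal case} is the main technical contribution. Since $m$ is odd, at least one copy of $K_{s,t}$ in the final tiling must straddle $H_1$ and $H_2$. I would select a small ``repair'' family $\mathcal{F}$ of $O(1)$ straddling $K_{s,t}$-copies such that, after deleting their vertices, each residual half $H_i'$ has balanced $U$- and $V$-parts, order divisible by $s+t$, an even number of $K_{s,t}$-copies to accommodate, and minimum degree still above the even-$m$ threshold. The hypothesis $t \geq 2s+1$ enters decisively here: when a straddling $K_{s,t}$ is placed with its size-$t$ part split as $t = t_1 + t_2$ across the halves, one can arrange $\min(t_1, t_2) \geq s+1$, giving enough flexibility to simultaneously correct the $O(1)$ imbalance, divisibility, and parity defects in both halves. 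For $t \leq 2s$ the split is too lopsided for this to work, which is why the threshold of \cite{HS} cannot be lowered in that regime. After removing $\mathcal{F}$ I would apply the even-$m$ case of Theorem~\ref{HS ub} separately to $H_1'$ and $H_2'$.

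The \emph{main obstacle} will be the extremal-case bookkeeping: producing $\mathcal{F}$ using only $O(1)$ straddling copies, verifying that the balance, divisibility, parity, and minimum-degree conditions can be restored simultaneously in both halves, and checking that the bound $\delta(G) \geq (n+3s)/2 - 1$ is exactly what makes this feasible. Degenerate configurations at the interface of the two halves --- where the few cross-edges may be concentrated on a small vertex set --- will require a direct case analysis rather than averaging.
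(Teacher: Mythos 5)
There is a genuine gap, and it sits in the part you yourself identify as the main technical contribution: the extremal case. Your repair mechanism asks for $O(1)$ straddling copies of $K_{s,t}$ whose $t$-side is split $t=t_1+t_2$ across the two halves with $\min(t_1,t_2)\geq s+1$. Every vertex of the $s$-side of such a copy would then need at least $s+1$ neighbors in the opposite half, i.e.\ at least $s+1$ neighbors across the sparse cut --- and this is exactly what an extremal graph need not provide. The tight construction of Proposition \ref{counterexample} (which satisfies the hypothesis up to an additive $1/2$ or $1$) has \emph{all} cross-degrees equal to $s-1$ and $K_{2,2}$-free cross-pairs, and graphs satisfying $\delta(G)\geq\frac{n+3s}{2}-1$ can be essentially of this shape; so the straddling copies you want simply may not exist, and no choice of $\alpha$ fixes this. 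The paper's extremal argument is built precisely to avoid this demand: when the four classes have unbalanced sizes it moves centers of $s$-stars across the cut (a vertex only needs $s$ cross-neighbors, guaranteed in quantity by Lemma \ref{lem:Zhao} via Claims \ref{TildeStars} and \ref{Stars}); when the sizes are balanced it either again uses vertices with $\geq s$ cross-neighbors (Case 2.1) or routes the two necessary crossing copies through the exceptional sets $U_0,V_0$ (Claim \ref{K_1}), whose usability is exactly what the bound $\delta(G)\geq\frac{n+3s}{2}-1$ buys through \eqref{V_0} and \eqref{U_0}, since then $|U_0|,|V_0|\geq s$ and the hatted vertices have enough neighbors there. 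In particular your explanation of where $t\geq 2s+1$ ``enters decisively'' does not match any step that can be carried out; in the paper the crossing copies split the $t$-side as $\ceiling{t/2},\floor{t/2}$ with the $s$-side inside $U_0$ or $V_0$, and no part of size $\geq s+1$ across the cut is ever required.

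A secondary concern is the non-extremal case. You propose to re-prove an $n/2+s-1+o(n)$ threshold for non-extremal graphs by a regularity/absorbing/LP argument, ``citing \cite{HS} for most of the details,'' but \cite{HS} is a short note that does not supply such a framework, so as written this is an unsubstantiated reduction. The paper's route is both simpler and rigorous: apply Zhao's stability theorem (Theorem \ref{stability}) with $r=s+t$, so that a non-extremal graph with $\delta(G)\geq(\half-\beta)n$ is tiled by $K_{s+t,s+t}$, and each such copy splits into two copies of $K_{s,t}$. You could adopt that step verbatim, but the extremal-case mechanism needs to be replaced along the lines above rather than patched.
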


Then in Section \ref{lower bound} we prove that the minimum degree condition in Theorem \ref{main} is tight.

\begin{proposition}\label{counterexample}
Let $1\leq s<t$ be fixed integers with $2s+1\leq t$.  There exists $m_0$ such that the following holds for all odd $m$ with $m\geq m_0$. There exists a balanced bipartite graph, $G$, on $2n=2m(s+t)$ vertices with 
$$\delta(G)= \left\lbrace \begin{array}{ll}\frac{n+3s}{2}-\frac{3}{2}   & \text{ if } t \text{ is odd } \\
              \frac{n+3s}{2}-2 & \text{ if } t \text{ is even } \end{array} \right. $$
such that $G$ cannot be tiled with $K_{s,t}$.
\end{proposition}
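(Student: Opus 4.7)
The plan is to exhibit an explicit graph $G$ that achieves the claimed extremal minimum degree yet contains no $K_{s,t}$-tiling. Following the template of Zhao and of Hladk\'y--Schacht, I would partition $U = U_1 \cup U_2$ and $V = V_1 \cup V_2$, take $G$ to contain the complete bipartite graphs on $U_1 \cup V_1$ and on $U_2 \cup V_2$, and insert a carefully chosen sparse bipartite graph $H$ on the off-diagonal blocks $U_1 \times V_2$ and $U_2 \times V_1$. The partition sizes will be chosen close to $n/2$ but shifted so that neither side $U_i \cup V_i$ is tileable by $K_{s,t}$'s on its own; the graph $H$ will carry just enough edges to push $\delta(G)$ up to the target, while its small-bipartite-subgraph constraints (essentially being $K_{2,2}$-free at the relevant scale) will forbid most ``cross'' copies of $K_{s,t}$.

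Verifying the minimum degree is a direct computation. In the $t$-even case, a natural choice is $|U_1|=|V_1|=(n-s)/2$ and $|U_2|=|V_2|=(n+s)/2$, with $H$ having $U_1$- and $V_2$-side degrees near $2s-2$ and $U_2$- and $V_1$-side degrees near $s-2$, yielding $\delta(G)=(n+3s)/2-2$. In the $t$-odd case one shifts $|U_1|$ by $\pm 1/2$ for integrality and drops a single vertex's degree by $1$ to realize the half-integer minimum degree $(n+3s)/2-3/2$. Since $(s+t)\nmid |U_1|$ (from $2|U_1|\equiv t\pmod{s+t}$ and $s+t\nmid t$), neither block is tileable in isolation, so any would-be $K_{s,t}$-tiling of $G$ must include some cross copies.

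The heart of the argument is to show that the allowed cross copies cannot patch together into a tiling. Using $t>2s+1$ together with the degree bounds in $H$ and its $K_{2,2}$-free-like structure, I would show that every cross $K_{s,t}$ has one of a short explicit list of shapes, each with all but one or two of its vertices on a single side. Writing the contributions of every tile type --- the two orientations of internal copies on each side plus the few allowed cross shapes --- to each of $|U_1|, |V_1|, |U_2|, |V_2|$ gives a linear Diophantine system; reducing modulo $2$ then forces an inconsistency with the required count of $m$ copies of each orientation when $m$ is odd. The main obstacle is precisely this final parity/counting step: the list of allowed cross shapes depends on the parities of $s$ and $t$ and can become larger for small $s$, and one must verify that the specific $H$ chosen really does forbid every unwanted cross shape while still meeting the minimum-degree target.
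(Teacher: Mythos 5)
Your high-level template (complete diagonal blocks, sparse $K_{2,2}$-free off-diagonal blocks, sizes incongruent mod $s+t$) matches the paper, but the concrete construction has a degree error that points to a missing idea. With $|U_1|=|V_1|=(n-s)/2$, $|U_2|=|V_2|=(n+s)/2$, off-diagonal degrees $2s-2$ on $U_1,V_2$ and $s-2$ on $U_2,V_1$, a vertex of $V_1$ has degree $|U_1|+(s-2)=(n+s)/2-2$, which is $s$ below the target $(n+3s)/2-2$; this affects half the graph, not one vertex. The only repair within your framework is to raise the off-diagonal degrees of $V_1$ (and by the handshake count then also of $U_2$, or of $U_1,V_2$) to about $2s-2\geq s$, but once a vertex of $U_1$ has $s$ neighbours in $V_2$ it sits in a copy of $K_{s,t}$ with $t-1$ further vertices of $U_2$ (using that $G[U_2,V_2]$ is complete), and such crossing copies shift one vertex at a time between the two halves — exactly the flexibility that tends to make the graph tileable and that your $K_{2,2}$-freeness does nothing to prevent. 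The paper avoids this by adding exceptional sets $U_0,V_0$ of size $s-1$, joined completely to the opposite large classes and not to each other: this raises every degree by $s-1$ while keeping all off-diagonal degrees at exactly $s-1<s$, which is the structural fact driving the whole classification of crossing copies. Your construction has no analogue of $U_0,V_0$, and without it the degree target and the ``no vertex has $s$ cross-neighbours'' property cannot be achieved simultaneously.

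The second gap is the one you flag yourself: the non-tileability argument is only a plan. The paper's proof is not a parity computation mod $2$; it classifies crossing copies into two explicit types, uses a swapping argument on a tiling minimizing the number of crossing copies to forbid certain coexisting types (Claim \ref{forbidden}), and then derives a contradiction from a counting claim of the form $|U_i^*|=\ell(s+t)+as+bt$, $|V_i^*|=\ell(s+t)+at+bs$ combined with upper bounds $|U_i^*|\leq|U_i\cup U_0|+O(s)$ or $+O(t)$ coming from the fact that $|U_0|=|V_0|=s-1$ caps the number of crossing copies of each type. The sets $U_0,V_0$ are again essential here (they bound $|X_i|,|Y_i|\leq s-1$), so the counting step cannot be transplanted to your construction. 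As written, the proposal establishes neither the minimum degree nor the non-tileability.
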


Let $m=2k+1$ for some $k\in\mathbb{N}$ and let $n=m(s+t)$.  We note that when $t=2s+1$, $\frac{n+3s}{2}-1=(k+1)(s+t)-\frac{3}{2}$ and $\frac{n+t+s}{2}-1=(k+1)(s+t)-1$.  So the value for the lower bound in Theorem \ref{main} is smaller than the value for the lower bound in Theorem \ref{HS ub} when $t=2s+1$, but since $\delta(G)$ only takes integer values the minimum degree condition in Theorem \ref{main} is not an improvement until $t>2s+1$.

\section{Proof of Theorem \ref{main}} 

For disjoint sets $A,B\subseteq V(G)$, we define $e(A,B)$ to be the number of edges with one end in $A$ and the other end in $B$ and for $v\in V(G)\setminus A$ we write $\deg(v,A)$ instead of $e(\{v\},A)$.  Also, $d(A,B)=\frac{e(A,B)}{|A||B|}$, $\delta(A,B)=\min\{\deg(v,B):v\in A\}$ and $\Delta(A,B)=\max\{\deg(v,B):v\in A\}$.  An \emph{$h$-star from $A$ to $B$}, is a copy of $K_{1,h}$ with the vertex of degree $h$, \emph{the center}, in $A$ and the vertices of degree $1$, \emph{the leaves}, in $B$.

The following theorem appears in \cite{Z}.

\begin{theorem}[Zhao \cite{Z}]\label{stability}
For every $\alpha>0$ and every positive integer $r$, there exist $\beta>0$ and positive integer $m_1$ such that the following holds for all $n=mr$ with $m\geq m_1$.  Given a bipartite graph $G[U,V]$ with $|U|=|V|=n$, if $\delta(G)\geq (\half-\beta)n$, then either $G$ can be tiled with $K_{r,r}$, or there exist \begin{equation}\label{extremalcondition} U_1'\subseteq U,~ V_2'\subseteq V,~\text{ such that } |U_1'|=|V_2'|=\floor{n/2},~ d(U_1',V_2')\leq \alpha.\end{equation}
\end{theorem}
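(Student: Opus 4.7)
The plan is to prove Theorem \ref{stability} via the bipartite Szemer\'edi Regularity Lemma combined with a Key/Blow-up Lemma argument, proceeding by contradiction: I will assume that $G$ cannot be $K_{r,r}$-tiled and simultaneously that no sets $U_1', V_2'$ satisfying \eqref{extremalcondition} exist, and derive a contradiction.

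First I would apply the bipartite Regularity Lemma with parameters $\epsilon \ll d \ll \alpha, 1/r$ to obtain clusters $U_1, \ldots, U_L \subseteq U$ and $V_1, \ldots, V_L \subseteq V$ of common size, together with a negligible exceptional set. Next, form the reduced graph $R$ on the cluster set, declaring $U_iV_j$ an edge when the pair $(U_i, V_j)$ is $\epsilon$-regular with density at least $d$. The hypothesis $\delta(G) \geq (\half-\beta)n$ with $\beta \ll d$ transfers to $\delta(R) \geq (\half - 2\beta - d - \epsilon)L$.

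The argument splits on whether $R$ admits a perfect matching between the $U$-clusters and $V$-clusters. In the matching case, after a standard cleaning step each matched pair $(U_i, V_{\pi(i)})$ is super-regular with density at least $d/2$, so the Blow-up Lemma (or a direct $K_{r,r}$-counting argument inside a super-regular pair) yields an almost-perfect $K_{r,r}$-tiling inside each pair. To upgrade this to a perfect tiling of $G$, I would precede the regularization by an absorbing step: reserve a small collection of vertices forming an absorbing subgraph that can accommodate any leftover set of bounded size with the correct side-balance. The minimum degree condition ensures that each vertex lies in many copies of $K_{r,r}$ minus a vertex, from which a standard probabilistic construction produces enough absorbers; this yields a perfect $K_{r,r}$-tiling, contradicting our assumption.

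In the non-matching case, K\"onig's theorem furnishes an independent set $I$ in $R$ with $|I| > L$, whose $U$-part $\mathcal{U}$ and $V$-part $\mathcal{V}$ satisfy $|\mathcal{U}| + |\mathcal{V}| > L$. The minimum degree of $R$ forces both $|\mathcal{U}|$ and $|\mathcal{V}|$ to lie in $[(\half - 3\beta)L, (\half + 3\beta)L]$. Taking $\bigcup \mathcal{U}$ and $\bigcup \mathcal{V}$ and padding or trimming by $O(\beta n)$ vertices gives sets $U_1', V_2'$ of exact size $\floor{n/2}$. Since no cluster in $\mathcal{U}$ is joined to any cluster in $\mathcal{V}$ by an edge of $R$, all edges between $\bigcup \mathcal{U}$ and $\bigcup \mathcal{V}$ come from irregular pairs or pairs of density less than $d$, yielding $d(\bigcup \mathcal{U}, \bigcup \mathcal{V}) \leq \epsilon + d$; the padding contributes at most $O(\beta)$ more, and choosing $\beta, \epsilon, d$ small enough gives $d(U_1', V_2') \leq \alpha$, again contradicting our assumption.

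The main obstacle I anticipate is the absorbing step in the matching case: one must construct an absorbing set whose residual after regularization (at most $O(\epsilon n + \beta n)$ vertices, split appropriately between $U$ and $V$) can be swapped into complete $K_{r,r}$-tiled copies. This requires an absorbing lemma showing that every vertex lies in many ``flexible'' copies of $K_{r,r}$, together with a careful accounting to ensure the side-balance of the leftover matches the absorber's capacity. A secondary subtlety in the extremal case is the integer rounding to exact size $\floor{n/2}$; the error should be absorbed by the slack between $\beta$, $d$, and $\alpha$.
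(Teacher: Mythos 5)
First, note that the paper does not prove Theorem \ref{stability} at all: it is quoted verbatim from Zhao \cite{Z} and used as a black box, so there is no internal proof to compare against. Judged on its own terms, your sketch has the right general architecture for a stability theorem (regularity lemma, a dichotomy on the reduced graph $R$, K\"onig's theorem to extract the extremal structure), and the non-matching branch is essentially sound: the independent set of size greater than $L$ together with $\delta(R)\geq(\half-O(\beta+d+\epsilon))L$ does force two cluster families of size $(\half\pm O(\beta+d+\epsilon))L$ with only low-density or irregular pairs between them, and the padding/trimming to $\floor{n/2}$ costs only $O(\beta+d+\epsilon)$ in density.

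The genuine gap is in the matching branch. Your case split is ``$R$ has a perfect matching'' versus ``it does not,'' but this does not align with ``tileable'' versus ``extremal.'' Take $G$ to be the disjoint union of two complete bipartite graphs $K_{a,a}$ and $K_{b,b}$ with $a,b\approx n/2$, $a+b=n\equiv 0\pmod r$ but $a\not\equiv 0\pmod r$. This $G$ satisfies $\delta(G)\geq(\half-\beta)n$, its reduced graph has a perfect matching, and it has no $K_{r,r}$-tiling; the correct output for it is the extremal pair $(U_1',V_2')$, which your matching branch never produces. Worse, the absorbing lemma you invoke cannot exist for such a $G$: the standard absorber construction for tilings requires, for any two vertices $u,u'$ on the same side, many copies of $K_{r,r}$ minus a vertex that can be completed by either $u$ or $u'$, and for $u,u'$ in different components of this example there are none. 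So ``each vertex lies in many copies of $K_{r,r}$ minus a vertex'' (which is true here) is not the hypothesis your probabilistic construction needs, and the point where the construction fails is exactly where the extremal structure must be extracted. To repair the argument you would need a finer trichotomy --- e.g., either $R$ fails to have a perfect matching (K\"onig gives the extremal sets), or $R$ has a matching but the cluster graph is ``splittable'' into two near-halves with few crossing edges (again extremal), or neither, in which case there is enough cross-connectivity to build absorbers and also to fix the divisibility defects within the matched super-regular pairs. That middle case, and the proof that failure of absorber connectivity yields \eqref{extremalcondition}, is the actual content of Zhao's theorem and is missing from your sketch.
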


If a balanced bipartite graph $G[U,V]$ on $2n$ vertices with $n$ divisible by $r$ satisfies (\ref{extremalcondition}), we say $G$ is \emph{extremal} with parameter $\alpha$.  In this case we set $U_2':=U\setminus U_1'$ and $V_1':=V\setminus V_2'$.

If we replace $r$ with $s+t$ in Theorem \ref{stability}, we see that either $G$ can be tiled with $K_{s+t,s+t}$ or else we are in the extremal case.  If it is the case that $G$ can be tiled with $K_{s+t,s+t}$, we split each copy of $K_{s+t,s+t}$ into two copies of $K_{s,t}$ to give the desired tiling.  So we must only deal with the extremal case.

\subsection{Pre-processing}\label{preprocess}

\begin{claim}\label{diagonals}
Let $0<\alpha\ll 1$, $r\in \mathbb{N}$ and let $m_1\in \mathbb{N}$ be given by Theorem \ref{stability}.  Let $m\geq m_1$ and suppose that $G[U,V]$ is a balanced bipartite graph on $2n=2mr$ vertices such that $\delta(G)= \frac{n}{2}+C$, where $0\leq C\leq 3r/2$.  Suppose further that the deletion of any edge of $G$ will cause the resulting graph to have minimum degree less than $\frac{n}{2} +C$.  If $G$ is extremal with parameter $\alpha$, then $d(U_2', V_1')\leq 5\sqrt{\alpha}$.
\end{claim}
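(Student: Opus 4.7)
The plan is to use the edge-minimality hypothesis to dualize the constraint $d(U_1', V_2') \le \alpha$ into a bound on $d(U_2', V_1')$. Two ingredients drive the proof: the minimum degree condition, which forces the ``parallel'' blocks $U_1' \times V_1'$ and $U_2' \times V_2'$ to be nearly complete; and the edge-minimality hypothesis, which guarantees that every edge of $G$ has at least one \emph{tight} endpoint (a vertex whose degree equals $n/2 + C$).

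First I would observe, by summing $\deg(v, U_2')$ over $v \in V_2'$, that the minimum degree condition together with $e(U_1', V_2') \le \alpha (n/2)^2$ yields $e(U_2', V_2') \ge (n/2)(n/2+C) - \alpha (n/2)^2$, so the block $U_2' \times V_2'$ is nearly full. Let $T_U \subseteq U$ and $T_V \subseteq V$ denote the sets of tight vertices. Edge-minimality implies that any non-tight vertex has all of its neighbors in $T_U \cup T_V$.

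The core of the argument is the estimate $e(T_U \cap U_2', V_1') \le \alpha (n/2)^2$. For this, write
\[
\sum_{u \in T_U \cap U_2'}\deg(u, V_1') = |T_U \cap U_2'|(n/2+C) - e(T_U \cap U_2', V_2').
\]
Then decompose $e(T_U \cap U_2', V_2') = e(U_2', V_2') - e(U_2' \setminus T_U, V_2')$, bound $e(U_2' \setminus T_U, V_2') \le |U_2' \setminus T_U| \cdot |T_V \cap V_2'|$ using edge-minimality, and substitute the lower bound on $e(U_2', V_2')$. The crucial cancellation comes from the inequality $|T_V \cap V_2'| \le |V_2'| = \lfloor n/2 \rfloor$, which forces the residual term $|U_2' \setminus T_U|(|T_V \cap V_2'| - n/2 - C)$ to be non-positive, leaving the clean bound $\alpha (n/2)^2$. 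The symmetric argument, interchanging the two sides of the bipartition, yields $e(U_2', T_V \cap V_1') \le \alpha (n/2)^2$.

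Finally, split $e(U_2', V_1') = e(T_U \cap U_2', V_1') + e(U_2' \setminus T_U, V_1')$. The first piece is bounded by the core estimate. For the second, edge-minimality confines the neighbors of non-tight $u \in U_2'$ to $T_V$, so $e(U_2' \setminus T_U, V_1') \le e(U_2', T_V \cap V_1') \le \alpha(n/2)^2$ by the symmetric estimate. Adding gives $e(U_2', V_1') \le 2\alpha(n/2)^2$, hence $d(U_2', V_1') \le 2\alpha \le 5\sqrt{\alpha}$ for $\alpha \ll 1$. The main obstacle I expect is recognizing the telescoping that produces the residual term $|U_2' \setminus T_U|(|T_V \cap V_2'| - n/2 - C)$ and seeing that its sign is automatically non-positive because $|T_V \cap V_2'| \le n/2$; once that grouping is identified, the rest of the proof is a short double application and a two-line decomposition.
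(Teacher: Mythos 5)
Your proof is correct, and it takes a genuinely different route from the paper's, in fact reaching a stronger conclusion ($d(U_2',V_1')\le 2\alpha$ up to lower-order terms, versus $5\sqrt{\alpha}$). The paper argues by contradiction: assuming $d(U_2',V_1')>5\sqrt{\alpha}$, it first discards the few vertices of $U_2'$ (resp.\ $V_1'$) with degree below $(1-\sqrt{\alpha})\frac{n}{2}$ to $V_2'$ (resp.\ $U_1'$), then notes that among the remaining vertices those whose cross-degree is at least $\sqrt{\alpha}\frac{n}{2}+C+1$ on the two sides cannot be joined by an edge (both endpoints would have degree at least $\frac{n}{2}+C+1$, contradicting edge-minimality), and finally contradicts the assumed density of $U_2'\times V_1'$ by a crude edge count over the threshold sets $X',Y',X'',Y''$. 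You instead dualize edge-minimality into the statement that every edge has a tight endpoint and run a direct, exact counting argument: tight vertices of $U_2'$ send few edges to $V_1'$ because $U_2'\times V_2'$ is nearly complete while their total degree is exactly $\frac{n}{2}+C$, and non-tight vertices of $U_2'$ only send edges to tight vertices of $V_1'$, which the symmetric estimate controls. Both proofs rest on the same two ingredients (near-completeness of the parallel blocks forced by the minimum degree plus sparseness of $U_1'\times V_2'$, and the edge-minimality dichotomy), but your telescoping with exact degree identities avoids the $\sqrt{\alpha}$ losses inherent in the paper's thresholding, while the paper's version is more robust bookkeeping-wise since it never needs equalities. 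Two small points to tidy: since $|U_2'|=\lceil n/2\rceil$ can exceed $|V_2'|=\lfloor n/2\rfloor$ by one, your ``clean'' core bounds actually carry an extra additive term of at most $\frac{n}{2}+C$; and the final step $2\alpha+O(1/n)\le 5\sqrt{\alpha}$ uses that $n$ is large relative to $\alpha$ and $C$ --- the same largeness the paper invokes when it says its final display is a contradiction ``since $n$ is sufficiently large.''
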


\begin{proof}
Let $\gamma:=5\sqrt{\alpha}$ and suppose $d(U_2',V_1')>\gamma$.  Let $X'=\{u\in U_2':\deg(u,V_2')<(1-\sqrt{\alpha})\frac{n}{2}\}$, $Y'=\{v\in V_1':\deg(v,U_1')<(1-\sqrt{\alpha})\frac{n}{2}\}$. Since $e(U_1',V_2')\leq \alpha \frac{n^2}{4}$ and $e(U_1',V)\geq |U_1'|\frac{n}{2}$, we have $e(U_1', V_1')\geq |U_1'|\frac{n}{2}-\alpha\frac{n^2}{4}$.  Thus we can bound the non-edges between $U_1'$ and $V_1'$, $$\sqrt{\alpha}\frac{n}{2}|Y'|\leq \bar{e}(U_1',V_1')\leq \alpha\frac{n^2}{4},$$ which gives $|Y'|\leq\sqrt{\alpha}\frac{n}{2}$.  Similarly we have $|X'|\leq \sqrt{\alpha}\frac{n}{2}$. Let $U_2''=U_2'\setminus X'$ and $V_1''=V_1'\setminus Y'$.  Since $d(U_2', V_1')>\gamma$, we have
\begin{equation}\label{doubleprime}
e(U_2'',V_1'')\geq \gamma\frac{n^2}{4}-2\sqrt{\alpha}\frac{n^2}{4}=3\sqrt{\alpha}\frac{n^2}{4}.
\end{equation}

Let $X''=\{u\in U_2'':\deg(u,V_1'')\geq \sqrt{\alpha}\frac{n}{2}+C+1\}$ and $Y''=\{v\in V_1'':\deg(v,U_2'')\geq \sqrt{\alpha}\frac{n}{2}+C+1\}$.  If there is an edge $uv\in E(X'', Y'')$, then $\deg(u),\deg(y)\geq\frac{n}{2}+C+1$ which contradicts the edge minimality of $G$, so suppose $e(X'', Y'')=0$. Finally, by \eqref{doubleprime} we have 
$$3\sqrt{\alpha}\frac{n^2}{4}\leq e(U_2'',V_1'')\leq e(X'',Y'')+e(U_2''\setminus X'',V_1'')+e(V_1''\setminus Y'', U_2'')\leq 0+2(\sqrt{\alpha}\frac{n}{2}+C)\frac{n}{2},$$
which is a contradiction, since $n$ is sufficiently large.

\end{proof}

Let $1\leq s<t$ be integers so that $2s+1\leq t$, and let $0<\alpha\ll 1$ (setting $\alpha:=\left(\frac{1}{32t(s+t)}\right)^{3}$ is small enough).  Let $G[U,V]$ be a balanced bipartite graph on $2n=2m(s+t)$ vertices, where $m=2k+1$ and $k$ is a sufficiently large integer with respect to $(\frac{\alpha}{5})^2$.  Suppose that $G$ is extremal with parameter $(\frac{\alpha}{5})^2$ and edge-minimal with respect to the condition $\delta(G)\geq \frac{n+3s}{2}-1$.  By Claim \ref{diagonals} we have $d(U_i',V_{3-i}')\leq \alpha$ for $i=1,2$.  Then for $i=1,2$, we define
\begin{align*}
&U_i=\{u\in U:\deg(u,V_{3-i}')<\alpha^{\frac{1}{3}}\frac{n}{2}\},~ V_i=\{v\in V:\deg(v,U_{3-i}')<\alpha^{\frac{1}{3}}\frac{n}{2}\},\\
&U_0=U-U_1-U_2, \text{ and } V_0=V-V_1-V_2.
\end{align*}
As a consequence of these definitions, we have the following.
\begin{claim}\label{bounds} For $i=1,2$
\begin{align*}
\emph{(i)}&~ (1-\alpha^{2/3})\frac{n}{2}\leq |U_i|,|V_i|\leq (1+\alpha^{2/3})\frac{n}{2},~~~  \emph{(ii)}~|U_0|,|V_0|\leq \alpha^{2/3}n,\\
\emph{(iii)}&~(1-2\alpha^{1/3})\frac{n}{2}<\delta(U_i,V_i),\delta(V_i,U_i),~~~ \emph{(iv)}~(\alpha^{1/3}-\alpha^{2/3})\frac{n}{2}\leq \delta(U_0,V_i),\delta(V_0,U_i),\\ \emph{(v)}&~\Delta(U_i,V_{3-i}), \Delta(V_{3-i},U_i)\leq \alpha^{1/3}n
\end{align*}
\end{claim}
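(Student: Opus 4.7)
The plan is to prove each part by elementary double-counting, using three ingredients: the density estimates $d(U_1',V_2')\leq(\alpha/5)^2$ (from the extremality hypothesis) and $d(U_2',V_1')\leq\alpha$ (from Claim \ref{diagonals}), together with $\delta(G)\geq\frac{n+3s}{2}-1$. The organizing idea is that $U_i$ and $V_i$ differ from $U_i'$ and $V_i'$ only by a negligible set of atypical vertices, and each of (i)--(v) is a quantitative version of this.

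I would first prove (ii) and (v). For (ii), any $u\in U_0\cap U_i'$ satisfies $\deg(u,V_{3-i}')\geq\alpha^{1/3}\frac{n}{2}$ (since $u\notin U_i$), so the bound $e(U_i',V_{3-i}')\leq\alpha\frac{n^2}{4}$ forces $|U_0\cap U_i'|\leq\alpha^{2/3}\frac{n}{2}$, and summing over $i$ gives $|U_0|\leq\alpha^{2/3}n$; the bound on $|V_0|$ is symmetric. For (v), fix $u\in U_i$; by definition $\deg(u,V_{3-i}')<\alpha^{1/3}\frac{n}{2}$, so it suffices to bound $|V_i'\cap V_{3-i}|$. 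Any $v$ in this intersection has $\deg(v,U_i')<\alpha^{1/3}\frac{n}{2}$, hence $\delta(G)$ forces $\deg(v,U_{3-i}')>(1-\alpha^{1/3})\frac{n}{2}$; a second double-count against $e(U_{3-i}',V_i')\leq\alpha\frac{n^2}{4}$ yields $|V_i'\cap V_{3-i}|=O(\alpha n)$, so $\Delta(U_i,V_{3-i})\leq\alpha^{1/3}\frac{n}{2}+O(\alpha n)\leq\alpha^{1/3}n$; the bound on $\Delta(V_{3-i},U_i)$ is symmetric.

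For (i), I split $|U_i|=|U_i\cap U_i'|+|U_i\cap U_{3-i}'|$. A vertex $u\in U_i\cap U_{3-i}'$ has $\deg(u,V_{3-i}')<\alpha^{1/3}\frac{n}{2}$, so $\delta(G)$ forces $\deg(u,V_i')>(1-\alpha^{1/3})\frac{n}{2}$; a double-count against the edges between $U_{3-i}'$ and $V_i'$ gives $|U_i\cap U_{3-i}'|=O(\alpha n)$. Likewise $|U_i'\setminus U_i|\leq|U_i'\cap U_0|+|U_i'\cap U_{3-i}|$ is controlled by (ii) and the same type of double-count, placing $|U_i|$ within $O(\alpha^{2/3}n)$ of $\frac{n}{2}$, inside the stated window; the $|V_i|$ bounds are symmetric. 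For (iii), using the refined estimate $\deg(u,V_{3-i})\leq\alpha^{1/3}\frac{n}{2}+O(\alpha n)$ produced while proving (v), together with $\deg(u,V_0)\leq|V_0|\leq\alpha^{2/3}n$, I obtain
\[\deg(u,V_i)\geq\delta(G)-\alpha^{2/3}n-\alpha^{1/3}\tfrac{n}{2}-O(\alpha n)>\bigl(1-2\alpha^{1/3}\bigr)\tfrac{n}{2},\]
where the strict inequality uses $s\geq 1$ (so $\tfrac{3s}{2}-1>0$) and $\alpha^{2/3}n+O(\alpha n)\leq\alpha^{1/3}\tfrac{n}{2}$ for $\alpha$ small. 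Finally, for (iv), every $u\in U_0$ has $\deg(u,V_i')\geq\alpha^{1/3}\frac{n}{2}$ by definition, and subtracting $|V_i'\setminus V_i|$, bounded along the way by $|V_i'\cap V_0|+|V_i'\cap V_{3-i}|$, yields $\deg(u,V_i)\geq(\alpha^{1/3}-\alpha^{2/3})\frac{n}{2}$.

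The only real obstacle is careful bookkeeping: one must track the asymmetry between the densities $(\alpha/5)^2$ on $(U_1',V_2')$ and $\alpha$ on $(U_2',V_1')$, and verify that each lower-order error term ($O(\alpha n)$, $O(\alpha^{2}n)$, $O(\alpha^{5/3}n)$) is absorbed by the $\alpha^{2/3}$ slack in the statement. The choice $\alpha\leq\bigl(\tfrac{1}{32t(s+t)}\bigr)^{3}$ is far smaller than any threshold these absorptions require, so this is routine.
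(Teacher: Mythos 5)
Your proposal is essentially correct, and on the one part of the claim that the paper proves itself, part (v), it follows the same basic route: show that $U_i,V_i$ differ from $U_i',V_i'$ only by a small exceptional set and transfer the degree bound from the primed partition. The differences are in scope and bookkeeping. The paper disposes of (i)--(iv) by citing Zhao \cite{Z}, so your self-contained double-counting proofs of those parts are additional work rather than a replacement of an argument appearing here; they are the standard arguments and they do go through. For (v), the paper bounds $|U_i'\setminus U_i|,|V_i'\setminus V_i|\leq\alpha^{2/3}\frac{n}{2}$ in one shot (every vertex of $U_i'\setminus U_i$ has degree at least $\alpha^{1/3}\frac{n}{2}$ into $V_{3-i}'$, counted against $e(U_i',V_{3-i}')\leq\alpha\frac{n^2}{4}$, as in \eqref{eq1}--\eqref{eq3}), whereas you bound the smaller set $V_i'\cap V_{3-i}$ using the minimum degree condition together with the cross density, obtaining the stronger $O(\alpha n)$; either estimate suffices to conclude $\Delta(U_i,V_{3-i})\leq\alpha^{1/3}\frac{n}{2}+o(\alpha^{1/3}n)\leq\alpha^{1/3}n$.

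One small caution: your closing claim that every error term is ``absorbed by the $\alpha^{2/3}$ slack in the statement'' is not literally available in (iv), whose bound $(\alpha^{1/3}-\alpha^{2/3})\frac{n}{2}$ carries no extra slack. As you set it up, $|V_i'\setminus V_i|\leq|V_i'\cap V_0|+|V_i'\cap V_{3-i}|\leq\alpha^{2/3}\frac{n}{2}+O(\alpha n)$, which lands you just below the stated constant. The fix is exactly the paper's one-step estimate: every vertex of $V_i'\setminus V_i$, whether it lies in $V_0$ or in $V_{3-i}$, has degree at least $\alpha^{1/3}\frac{n}{2}$ to $U_{3-i}'$, so a single count against $e(V_i',U_{3-i}')\leq\alpha\frac{n^2}{4}$ gives $|V_i'\setminus V_i|\leq\alpha^{2/3}\frac{n}{2}$ outright, and (iv) (and the lower bound in (i), up to the harmless rounding coming from $|U_1'|=\floor{n/2}$) follows as you intend. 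With that adjustment your write-up is sound.
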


\begin{proof}
A proof of (i)-(iv) can be found in \cite{Z} and was also used in \cite{HS}.  So we prove (v) here.  

Let $i\in \{1,2\}$ and note that 
\begin{equation}\label{eq1}
|U_i'\setminus U_i|\alpha^{1/3}\frac{n}{2}\leq e(U_i'\setminus U_i, V_{3-i}')\leq e(U_i', V_{3-i}')\leq \alpha\frac{n^2}{4}
\end{equation}
and 
\begin{equation}\label{eq2}
|V_i'\setminus V_i|\alpha^{1/3}\frac{n}{2}\leq e(V_i'\setminus V_i, U_{3-i}')\leq e(V_i', U_{3-i}')\leq \alpha\frac{n^2}{4}.
\end{equation}
Then (\ref{eq1}) and (\ref{eq2}) imply 
\begin{equation}\label{eq3}
|U_i'\setminus U_i|, |V_i'\setminus V_i|\leq \alpha^{2/3}\frac{n}{2},
\end{equation}
which gives $\Delta(U_i, V_{3-i})\leq \Delta(U_i, V_{3-i}')+|V_{3-i}\setminus V_{3-i}'|\leq \Delta(U_i, V_{3-i}')+|V_{i}'\setminus V_{i}|\leq \alpha^{1/3}n$ and $\Delta(V_i, U_{3-i})\leq \Delta(V_i, U_{3-i}')+|U_{3-i}\setminus U_{3-i}'|\leq \Delta(V_i, U_{3-i}')+|U_{i}'\setminus U_{i}|\leq \alpha^{1/3}n$.

\end{proof}

We need to define some new sets which were not specified in \cite{Z}.  
\begin{definition}\label{tildehat}
For $i= 1,2$, let 
\begin{align*}
&\tilde{U_i}=\{u\in U_i:\deg(u,V_{3-i})\geq s\},~ \tilde{V_i}=\{v\in V_i:\deg(v,U_{3-i})\geq s\},\\
&\hat{U_i}=U_i\setminus \tilde{U_i}, \text{ and } \hat{V_i}=V_i\setminus \tilde{V_i}. 
\end{align*}
\end{definition}

Note that the following inequalities are satisfied:
\begin{align}
\delta(\hat{U_1},V_0)+\delta(\hat{U_2},V_0)&\geq n+3s-2-(|V_1|+s-1)-(|V_2|+s-1)=|V_0|+s ~~\text{and} \label{V_0}\\
\delta(\hat{V_1},U_0)+\delta(\hat{V_2},U_0)&\geq n+3s-2-(|U_1|+s-1)-(|U_2|+s-1)=|U_0|+s. \label{U_0}
\end{align}

\subsection{Preliminary Claims}

The following useful lemma appears in \cite{Z}.

\begin{lemma}[Zhao \cite{Z}, Fact 5.3]\label{lem:Zhao}
Let $F[A, B]$ be a bipartite graph with $\delta:=\delta(A,B)$ and $\Delta:=\Delta(B,A)$ 
Then $F$ contains $f_h$ vertex disjoint $h$-stars from $A$ to $B$, and $g_h$ vertex disjoint $h$-stars from $B$ to $A$ (the stars from $A$ to $B$ and those from $B$ to $A$ need not be disjoint), where
\begin{align*}
f_h\geq\frac{(\delta-h+1)|A|}{h\Delta+\delta-h+1}, ~~~ g_h\geq\frac{\delta|A|-(h-1)|B|}{\Delta+h\delta-h+1}.
\end{align*}

\end{lemma}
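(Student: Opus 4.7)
The plan is to prove both bounds by greedy extraction of vertex-disjoint stars and then bound the number of stars extracted from below using a double-count on edges incident to the ``used'' side.

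For $f_h$, I would greedily pick vertex-disjoint $h$-stars with centers in $A$ and leaves in $B$ until no further star can be added. Suppose the process terminates after selecting $f$ stars. Let $B^* \subseteq B$ be the $fh$ vertices used as leaves, and let $A'=A$ minus the $f$ chosen centers. By maximality of the greedy, every $v \in A'$ has fewer than $h$ neighbors in $B \setminus B^*$, hence at least $\delta-h+1$ neighbors in $B^*$. Counting edges between $A'$ and $B^*$ gives
\begin{equation*}
(|A|-f)(\delta-h+1)\ \leq\ e(A',B^*)\ \leq\ |B^*|\cdot \Delta\ =\ fh\Delta,
\end{equation*}
and solving for $f$ yields $f\geq \frac{(\delta-h+1)|A|}{h\Delta+\delta-h+1}$, as required.

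For $g_h$ the roles are reversed: centers lie in $B$ and leaves in $A$, so each star now consumes $h$ vertices of $A$ and $1$ vertex of $B$. Run the same greedy, ending after $g$ stars; let $A^*\subseteq A$ be the $gh$ used leaves, $A'=A\setminus A^*$, and $B^*$ the $g$ used centers. The obstruction now is that we have no lower bound on degrees from $B$, so I cannot mimic the first argument directly on $B'$. Instead I would double-count $e(A',B)$: from below, $\delta(A,B)=\delta$ gives $e(A',B)\geq \delta(|A|-gh)$; from above, split $e(A',B)=e(A',B\setminus B^*)+e(A',B^*)$, where $e(A',B^*)\leq g\Delta$ and, by greedy maximality, $e(A',B\setminus B^*)\leq (h-1)|B\setminus B^*|\leq (h-1)|B|$, or more precisely $(h-1)(|B|-g)$. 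Combining,
\begin{equation*}
\delta(|A|-gh)\ \leq\ (h-1)(|B|-g)+g\Delta,
\end{equation*}
and rearranging gives $g(\Delta+h\delta-h+1)\geq \delta|A|-(h-1)|B|$, which is the stated bound.

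The only subtlety, and the one conceptual obstacle, is the asymmetry in the hypotheses: we are given $\delta(A,B)$ and $\Delta(B,A)$ but nothing about $\delta(B,A)$ or $\Delta(A,B)$. This forces the second double-count to apply the degree lower bound on the $A$-side (summing $\deg(v,B)$ over $v\in A'$) rather than on the center side $B'$, and to absorb the ``slack'' through the crude bound $e(A',B\setminus B^*)\leq (h-1)|B|$ (or the slightly sharper $(h-1)(|B|-g)$). Apart from this bookkeeping, both estimates reduce to the same greedy-plus-edge-count template.
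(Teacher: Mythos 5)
Your proof is correct: both greedy-plus-double-counting arguments are sound, the maximality conditions are used properly, and the rearrangements give exactly the stated bounds (the division steps are legitimate since the bounds are only meaningful when $\delta\geq h$, resp.\ $\delta\geq 1$, in which case the denominators are positive). The paper itself quotes this lemma from Zhao's paper without proof, and your argument is essentially the same maximal-star counting proof given there, so there is nothing to add.
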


We now prove three claims that we will need in the main proof.

\begin{claim}\label{TildeStars}
Let $i\in\{1,2\}$ and $\{A,B\}=\{U_i,V_{3-i}\}$.  Let $0\leq c\leq \alpha^{1/3}n$, $B_0\subseteq B$ and $A_0=\{v\in A:\deg(v,B_0)\geq s+c\}$.
If $|A_0|\geq\frac{n}{4}$ then there is a set $\mathcal{S}_A$ of at least $\frac{c+1}{8s\alpha^{1/3}}$ vertex disjoint $s$-stars from $A_0$ to $B_0$.
\end{claim}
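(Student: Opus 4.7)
The plan is to apply Lemma \ref{lem:Zhao} (Zhao's star-packing lemma) directly to the bipartite subgraph $F[A_0, B_0]$, taking $h = s$. The lemma produces a lower bound on the number $f_s$ of vertex-disjoint $s$-stars from $A_0$ to $B_0$ in terms of $|A_0|$, $\delta(A_0,B_0)$, and $\Delta(B_0,A_0)$, so the task reduces to plugging in the right parameters.

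First I would identify the two key parameters. The lower bound $\delta(A_0, B_0) \geq s + c$ is immediate from the definition of $A_0$. The upper bound on $\Delta(B_0, A_0)$ is where the pre-processing from Section \ref{preprocess} is essential: since $\{A,B\} = \{U_i, V_{3-i}\}$, we have $\Delta(B_0, A_0) \leq \Delta(B, A) \leq \alpha^{1/3} n$ by Claim \ref{bounds}(v). Together with $|A_0| \geq n/4$, this is all that Lemma \ref{lem:Zhao} needs.

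The rest is a routine calculation. Substituting $\delta = s+c$, $h=s$, $\Delta \leq \alpha^{1/3}n$ and $|A_0| \geq n/4$ into
\[
f_s \geq \frac{(\delta - s + 1)|A_0|}{s\Delta + \delta - s + 1}
\]
gives $f_s \geq \dfrac{(c+1)(n/4)}{s\alpha^{1/3}n + c + 1}$. Using the hypothesis $c \leq \alpha^{1/3}n$, the denominator is at most $2s\alpha^{1/3} n$ (for $n$ large with respect to $1/\alpha^{1/3}$), and so $f_s \geq \dfrac{c+1}{8s\alpha^{1/3}}$, exactly as claimed.

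The only subtlety I anticipate, and the closest thing to an obstacle, is bounding the denominator when $s = 1$, since then $s\alpha^{1/3}n + c + 1$ can be as large as $2\alpha^{1/3} n + 1$ rather than comfortably $\leq 2s\alpha^{1/3}n$. But the additive $+1$ is absorbed once $n \geq 1/\alpha^{1/3}$, which is guaranteed by taking $m = 2k+1$ with $k$ sufficiently large relative to $1/\alpha$; so the desired bound still goes through without any loss of constants.
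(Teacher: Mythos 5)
Your proposal is correct and is essentially identical to the paper's proof: both apply Lemma \ref{lem:Zhao} to $G[A_0,B_0]$ with $h=s$, using $\delta(A_0,B_0)\geq s+c$ from the definition of $A_0$ and $\Delta(B_0,A_0)\leq\Delta(B,A)\leq\alpha^{1/3}n$ from Claim \ref{bounds}(v), and then bound the denominator by $2s\alpha^{1/3}n$ to get $f_s\geq\frac{c+1}{8s\alpha^{1/3}}$. The small boundary issue you flag (absorbing $c+1$ when $s=1$ and $c$ is near $\alpha^{1/3}n$) is treated with exactly the same implicit slack in the paper, so there is no substantive difference.
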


\begin{proof}
Let $\mathcal{S}_A$ be a maximum set of vertex disjoint $s$-stars from $A_0$ to $B_0$ and let $f_s=|\mathcal{S}_A|$.  We apply Lemma \ref{lem:Zhao} to the graph $G[A_0,B_0]$.   Recall, by Claim \ref{bounds}, that $\Delta(B,A)\leq \alpha^{1/3} n$.  Then
\begin{align*}
f_s\geq \frac{(c+1)|A_0|}{s\alpha^{1/3}n+c+1}\geq\frac{(c+1)\frac{n}{4}}{2s\alpha^{1/3}n}=\frac{c+1}{8s\alpha^{1/3}}.
\end{align*}

\end{proof}

Note that since $n=(2k+1)(s+t)$, we can write $\delta(G)\geq \frac{n+3s}{2}-1=k(s+t)+2s+\frac{t}{2}-1$.

\begin{claim} \label{Stars}Let $i\in\{1,2\}$ and $\{A,B\}=\{U_i,V_{3-i}\}$.  Let $|A|=k(s+t)+z$ and $|B|=k(s+t)+y$.  Suppose $y\geq z$ and $y\geq \frac{t+1}{2}$.  Then there is a set $\mathcal{S}_B$ of $y$ vertex disjoint $s$-stars with centers $C_B\subseteq B$ and leaves $L_A\subseteq A$.  Furthermore if $z\geq 1$, then there is a set $\mathcal{S}_A$ of $z$ vertex disjoint $s$-stars from $A\setminus L_A$ to $B\setminus C_B$.
\end{claim}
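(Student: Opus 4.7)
The strategy is to apply Lemma \ref{lem:Zhao} twice—first to $G[A,B]$ to pull out the $y$ stars of $\mathcal{S}_B$, and then to a judiciously chosen subgraph of $G[A\setminus L_A,B\setminus C_B]$ to pull out the $z$ stars of $\mathcal{S}_A$. The key preliminary is that the hypothesis $y\ge (t+1)/2$ forces $\delta(A,B)\ge s$: for any $a\in A$, since $|V\setminus B|=n-|B|=(k+1)(s+t)-y$, the minimum-degree condition gives
$$\deg(a,B)\ge \tfrac{n+3s}{2}-1-(n-|B|)=y-\tfrac{t-2s}{2}-1\ge s-\tfrac12,$$
and integrality of the degree promotes this to $\deg(a,B)\ge s$. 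Combined with $\Delta(B,A)\le \alpha^{1/3}n$ from Claim \ref{bounds}(v), Lemma \ref{lem:Zhao} delivers
$$g_s\ge \frac{s|A|-(s-1)|B|}{\alpha^{1/3}n+s^2-s+1}$$
vertex-disjoint $s$-stars from $B$ to $A$. Using $|A|=k(s+t)+z$, $|B|=k(s+t)+y$, and the bounds $|y|,|z|\le (s+t)/2+\alpha^{2/3}n/2$ from Claim \ref{bounds}(i), the numerator is $k(s+t)+O(\alpha^{2/3}n)=\Omega(n)$, so $g_s=\Omega(1/\alpha^{1/3})$; for our choice of $\alpha$ this dwarfs $y$, and any $y$ of these stars yield $\mathcal{S}_B$ with centers $C_B$ and leaves $L_A$.

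For $\mathcal{S}_A$ in the case $z\ge 1$, I restrict to $A':=\{a\in A\setminus L_A: N(a)\cap C_B=\emptyset\}$. Because $|C_B|=y$ and each vertex of $C_B$ has at most $\alpha^{1/3}n$ neighbors in $A$, at most $sy+y\alpha^{1/3}n$ vertices of $A$ are excluded, so $|A'|=\Omega(n)$. By construction $\deg(a,B\setminus C_B)=\deg(a,B)\ge s$ for every $a\in A'$, and $\Delta(B\setminus C_B,A')\le \alpha^{1/3}n$, so a second application of Lemma \ref{lem:Zhao} (now with the roles flipped to count stars from $A'$) produces
$$f_s\ge \frac{|A'|}{s\alpha^{1/3}n+1}=\Omega(1/\alpha^{1/3})$$
vertex-disjoint $s$-stars from $A'$ to $B\setminus C_B$; any $z$ of them form $\mathcal{S}_A$.

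The only step that is not routine bookkeeping is the derivation of $\delta(A,B)\ge s$—this is exactly where the threshold $y\ge (t+1)/2$ is used, and integrality is essential, since without it the estimate stops at $s-\tfrac12$. After that, the smallness of $|C_B|=y$ and the bound $\Delta(B,A)\le \alpha^{1/3}n$ guarantee that removing the vertices of $\mathcal{S}_B$ barely perturbs the minimum-degree side of $A$, and both applications of Lemma \ref{lem:Zhao} succeed by a wide margin because $\alpha$ was chosen small enough that $1/\alpha^{1/3}$ exceeds $y$ and $z$ by a large factor.
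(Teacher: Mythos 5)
There is a genuine gap: you treat $y$ and $z$ as if they were constants, but the hypotheses only give $y\geq\frac{t+1}{2}$ and $y\geq z$, while Claim \ref{bounds}(i) allows $y$ (and $z$) to be as large as roughly $\frac{s+t}{2}+\alpha^{2/3}\frac{n}{2}$, i.e.\ linear in $n$. (This is exactly the regime that occurs in Case 1 of the main proof, where $|U_1|$ can exceed $k(s+t)$ by order $\alpha^{2/3}n$.) Your first application of Lemma \ref{lem:Zhao} uses only $\delta(A,B)\geq s$, so the numerator $s|A|-(s-1)|B|$ is about $\frac{n}{2}$ and the bound you get is $g_s=\Omega(1/\alpha^{1/3})$ -- a quantity independent of $n$. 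The assertion that this ``dwarfs $y$'' is false when $y\sim\alpha^{2/3}n/2$, so you have not produced $y$ disjoint stars. The paper avoids this by feeding the full strength of the degree bound, $\delta(A,B)\geq y+s-\frac{t}{2}-1$, into Lemma \ref{lem:Zhao}; the resulting numerator is about $(y-\frac{t}{2})\frac{n}{3}$, giving $g_s\gtrsim\frac{y-t/2}{6\alpha^{1/3}}\geq y$ uniformly in $y$ (here $y\geq\frac{t+1}{2}$ and $\alpha\ll 1/t$ are what make the last inequality work). Your integrality remark ($\delta(A,B)\geq s$) is correct but is simply too weak a starting point.

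The same issue breaks your second step. The set $A'=\{a\in A\setminus L_A:\ N(a)\cap C_B=\emptyset\}$ excludes up to $y\alpha^{1/3}n$ vertices, which exceeds $|A|$ as soon as $y\geq 1/\alpha^{1/3}$; so $|A'|=\Omega(n)$ is unjustified, and even granting it, $\Omega(1/\alpha^{1/3})$ stars cannot cover a $z$ that is linear in $n$. This is why the paper's proof splits into the regimes $y<1/\beta$ and $y\geq 1/\beta$ (with $\beta=32s\alpha^{1/3}$): in the large-$y$ regime it shows, via the maximum-degree bound $\Delta(B,A)\leq\alpha^{1/3}n$ and an averaging argument over $C_B$, that all but at most $\frac{n}{16}$ vertices of $A\setminus L_A$ have degree at least $s+\frac{y}{2}$ into $B\setminus C_B$, and then applies Claim \ref{TildeStars} with $c=\frac{y}{2}$ to extract $\geq\frac{y}{16s\alpha^{1/3}}\geq z$ stars. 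Some such mechanism that makes the number of stars scale with $y$ (respectively $z$) is indispensable; without it your argument only proves the claim under the additional, unwarranted assumption that $y$ and $z$ are $O(1/\alpha^{1/3})$.
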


\begin{proof}
Let $\beta:=32s\alpha^{1/3}$ and recall that by the choice of $\alpha$ we have $\frac{1}{t}\gg \beta\gg 2\alpha^{1/3}$.  
We show that the desired set $\mathcal{S}_B$ exists by applying Lemma \ref{lem:Zhao} to the graph $G[A,B]$.  We have $\delta(A,B)\geq k(s+t)+2s+\frac{t}{2}-1-(n-|B|)=y+s-\frac{t}{2}-1$ and $\Delta(B,A)\leq \alpha^{1/3}n$ by Claim \ref{bounds}.  Let $g_s=|\mathcal{S}_B|$, then
\begin{align*}
g_s&\geq \frac{(y-\frac{t}{2}+s-1)(k(s+t)+z)-(s-1)(k(s+t)+z+y-z)}{\alpha^{1/3}n+s(y-\frac{t}{2}+s-1)-s+1}\\
&=\frac{(y-\frac{t}{2})(k(s+t)+z)-(s-1)(y-z)}{\alpha^{1/3}n+s(y-\frac{t}{2})+s^2-2s+1}\\
&\geq \frac{(y-\frac{t}{2})\frac{n}{3}}{2\alpha^{1/3}n}~~~~~~(\text{since } y\leq \alpha^{2/3}\frac{n}{2} \text{ and } -\alpha^{2/3}\frac{n}{2}\leq z, \text{ by Claim \ref{bounds}} )\\
&\geq y ~~~~~~(\text{since } y\geq \frac{t+1}{2} \text{ and } \alpha\ll 1).
\end{align*}
Thus the desired set $\mathcal{S}_B$ exists.

Suppose $z\geq 1$.  Let $c:=\frac{1}{2}y$ if $y\geq 1/\beta$, and let $c:=0$ if $y<1/\beta$.  Let $B_0=B\setminus C_B$ and $A_0=\{v\in A\setminus L_A|\deg(v,B_0)\geq s+c\}$ and $\bar{A}=(A\setminus L_A)\setminus A_0$.  Suppose that $|\bar{A}|\geq\frac{n}{16}$.  Then there exists $u\in C_B$ such that if $y<1/\beta$,
\begin{align*}
\deg(u,A)\geq\frac{e(\bar{A},C_B)}{|C_B|}\geq\frac{\left(y-\frac{t}{2}+s-1-(s-1)\right)\frac{n}{16}}{y}
=\frac{\left(y-\frac{t}{2}\right)\frac{n}{16}}{y}
> \frac{\beta n}{32}
\geq \alpha^{1/3}n
\end{align*}
and if $y\geq 1/\beta$,
\begin{align*}
\deg(u,A)\geq\frac{e(\bar{A},C_B)}{|C_B|}>\frac{\left(y-\frac{t}{2}+s-1-(s+\half y)\right)\frac{n}{16}}{y}
=\frac{\left(\frac{y}{2}-\frac{t}{2}-1\right)\frac{n}{16}}{y}
>\frac{n}{64}
\geq\alpha^{1/3}n,
\end{align*}
each contradicting Claim \ref{bounds}.  So $|\bar{A}|<\frac{n}{16}$ and thus $|A_0|\geq |A|-|L_A|-\frac{n}{16}\geq k(s+t)-s\alpha^{2/3}\frac{n}{2}-\frac{n}{16}\geq\frac{n}{4}$. Now let $\mathcal{S}_A$ be a maximum set of disjoint $s$-stars from $A_0$ to $B_0$ and let $f_s=|\mathcal{S}_A|$. By Lemma \ref{TildeStars} we have $f_s\geq \frac{c+1}{8s\alpha^{1/3}}$.  Recall that $1\leq z\leq y$.  If $y\geq 1/\beta$, then $f_s\geq \frac{y}{16s\alpha^{1/3}}\geq z$ and if $y<1/\beta$, then $f_s\geq \frac{1}{8s\alpha^{1/3}}\geq \frac{1}{\beta}\geq z$.  So the desired set $\mathcal{S}_A$ exists.

\end{proof}

\begin{claim}\label{K_1}
Suppose $|U_0|,|V_0|\geq s$.  If $|\hat{U_1}|\geq\frac{n}{8}$ and $|\hat{U_2}|\geq\frac{n}{8}$ (see Definition \ref{tildehat}), then there is a $K_{s,t}=:K^1$ with $s$ vertices in $V_0$, $\ceiling{t/2}$ vertices in $U_1$ and $\floor{t/2}$ vertices in $U_2$.  Likewise, if $|\hat{V_1}|\geq\frac{n}{8}$ and $|\hat{V_2}|\geq\frac{n}{8}$ then there is a $K_{s,t}=:K^2$ with $s$ vertices in $U_0$, $\ceiling{t/2}$ vertices in $V_1$ and $\floor{t/2}$ vertices in $V_2$.
\end{claim}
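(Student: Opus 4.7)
The plan is to prove the existence of $K^1$ via a double-counting argument; the construction of $K^2$ follows symmetrically. For each $s$-subset $S \subseteq V_0$, let $X_i(S) := |\{u \in U_i : S \subseteq N(u)\}|$ denote the size of the common neighborhood of $S$ in $U_i$; we seek some $S^*$ with $X_1(S^*) \geq \lceil t/2 \rceil$ and $X_2(S^*) \geq \lfloor t/2 \rfloor$. Consider
\[
\Sigma := \sum_{S \in \binom{V_0}{s}} X_1(S)\,X_2(S) \;=\; \sum_{(u,w) \in U_1 \times U_2} \binom{|N(u) \cap N(w) \cap V_0|}{s},
\]
the second expression obtained by switching the order of summation. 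Supposing for contradiction that no such $S^*$ exists, every $S$ satisfies $X_1(S) \leq \lceil t/2 \rceil - 1$ or $X_2(S) \leq \lfloor t/2 \rfloor - 1$; partitioning $\binom{V_0}{s}$ accordingly and using $\sum_S X_i(S) = \sum_{u \in U_i}\binom{\deg(u,V_0)}{s} \leq |U_i|\binom{|V_0|}{s}$ yields the upper bound $\Sigma \leq t n \binom{|V_0|}{s}$.

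The strategy is to contradict this by lower-bounding $\Sigma$ in two regimes, depending on whether $|V_0|$ lies below or above a threshold $C = C(s,t,\alpha)$ to be fixed. In the small regime $|V_0| \leq C$, we restrict the inner sum to $(u,w) \in \hat{U_1} \times \hat{U_2}$. Setting $a := \delta(\hat{U_1}, V_0)$ and $b := \delta(\hat{U_2}, V_0)$, inequality $(\ref{V_0})$ gives $a + b \geq |V_0| + s$, so $|N(u) \cap N(w) \cap V_0| \geq a + b - |V_0| \geq s$ for every such pair and each binomial summand contributes at least $1$. Combined with the hypothesis $|\hat{U_i}| \geq n/8$, this produces $\Sigma \geq n^2/64$; matched against the upper bound $t n \binom{C}{s}$ it forces $n \leq 64t\binom{C}{s}$, contradicting that $n$ is sufficiently large in terms of $s,t,\alpha$.

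In the large regime $|V_0| > C$, we instead invoke Claim \ref{bounds}(iv): every $v \in V_0$ has $\deg(v, U_1), \deg(v, U_2) \geq D := (\alpha^{1/3} - \alpha^{2/3})n/2$. Double-counting through $V_0$ yields
\[
\sum_{(u,w) \in U_1 \times U_2} |N(u) \cap N(w) \cap V_0| \;=\; \sum_{v \in V_0}\deg(v, U_1)\deg(v, U_2) \;\geq\; |V_0|D^2,
\]
so the average codegree $\bar c$ over $(u,w) \in U_1 \times U_2$ is at least $c_1 |V_0|\alpha^{2/3}$ for an explicit constant $c_1 > 0$. Choosing $C$ so that $\bar c \geq 2s$ in this regime (so $C$ is of order $s/\alpha^{2/3}$), Jensen's inequality yields $\Sigma \geq |U_1||U_2|\binom{\bar c}{s}$, which is of order $n^2(|V_0|\alpha^{2/3})^s$; comparing with the upper bound $\leq tn|V_0|^s/s!$ again bounds $n$ by a constant depending only on $s,t,\alpha$, a contradiction.

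The most delicate step will be the convexity application in the large regime: Jensen requires $\bar c$ to lie in a range where $\binom{\cdot}{s}$ is convex, which is why the threshold $C$ is chosen to guarantee $\bar c \geq 2s$ (making $\binom{\bar c}{s} \geq 1$ nontrivial). With this in place, the two regimes together cover every $|V_0| \geq s$, and in each the derived upper bound on $n$ contradicts the large-$n$ hypothesis, completing the proof.
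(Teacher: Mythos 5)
Your argument is correct, and it organizes the counting differently from the paper while sharing the same skeleton. Both proofs split on whether $|V_0|$ exceeds a constant threshold (depending only on $s,t,\alpha$), use Claim \ref{bounds}(iv) ($\delta(V_0,U_i)\geq(\alpha^{1/3}-\alpha^{2/3})\frac{n}{2}$) in the large regime, and use inequality (\ref{V_0}) together with $|\hat{U_1}|,|\hat{U_2}|\geq n/8$ in the small regime. The paper, however, builds $K^1$ by two nested star-counting/pigeonhole steps: in its Case 1 it first extracts a $K_{\lceil t/2\rceil,\ell}$ between $U_1$ and $V_0$ (via the auxiliary parameter $\ell$) and then a $K_{\lfloor t/2\rfloor,s}$ between $U_2$ and that $\ell$-set; in its Case 2 it sets $p=\delta(\hat{U_1},V_0)$, finds $K'=K_{\lceil t/2\rceil,p}$, and uses $\delta(\hat{U_2},V_0)\geq|V_0|-p+s$ so each vertex of $\hat{U_2}$ has $s$ neighbors inside $V(K')\cap V_0$, counting stars again. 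You replace both nested constructions by the single quantity $\Sigma=\sum_{S\in\binom{V_0}{s}}X_1(S)X_2(S)$, with the negation of the claim giving the upper bound $\Sigma\leq tn\binom{|V_0|}{s}$, and with the two regimes entering only in how $\Sigma$ is bounded from below: trivially (codegree $\geq s$ on $\hat{U_1}\times\hat{U_2}$, which is exactly where (\ref{V_0}) is used) when $|V_0|$ is bounded, and via a K\H{o}v\'ari--S\'os--Tur\'an/Jensen codegree computation when $|V_0|$ is large. This buys a more unified and symmetric proof (and in the large regime you do not even need the $\hat{U_i}$ hypothesis), at the cost of the convexity technicality you flag. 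One small precision on that point: Jensen needs convexity over the whole range of the summands $|N(u)\cap N(w)\cap V_0|$, not just near the mean, and the polynomial $x(x-1)\cdots(x-s+1)/s!$ is not convex below $x=s-1$ for $s\geq 3$; the standard fix is to apply Jensen to the convex extension that is $0$ for $x<s-1$ (which agrees with $\binom{x}{s}$ at nonnegative integers), and then your condition $\bar c\geq 2s$ makes the resulting bound $\binom{\bar c}{s}\geq(\bar c/2)^s/s!$ effective. With that routine adjustment, and choosing the threshold $C$ of order $s/\alpha^{2/3}$ as you indicate, both regimes yield $n=O_{s,t,\alpha}(1)$, contradicting $n$ large, so the proof goes through.
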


\begin{proof}

Without loss of generality we will only prove the first statement.  Let $$\ell:=s\binom{|U_2|}{\floor{t/2}}/\binom{\ceiling{(\alpha^{1/3}-\alpha^{2/3})n/2}}{\floor{t/2}}$$
and recall that $|U_1|, |U_2|\leq (1+\alpha^{2/3})\frac{n}{2}$ by Claim \ref{bounds}. Thus we have
\begin{equation}\label{ell}
\ell\leq s\left(\frac{|U_2|}{(\alpha^{1/3}-\alpha^{2/3})\frac{n}{2}-\floor{t/2}}\right)^{\floor{t/2}}\leq  s\left(\frac{(1+\alpha^{2/3})\frac{n}{2}}{(\alpha^{1/3}-\alpha^{2/3})\frac{n}{3}}\right)^{\floor{t/2}}\leq s\left(\frac{3(1+\alpha^{2/3})}{2(\alpha^{1/3}-\alpha^{2/3})}\right)^{\floor{t/2}}  .\end{equation}

\noindent
\textbf{Case 1. } $|V_0|\geq  \ell \binom{|U_1|}{\ceiling{t/2}}/\binom{\ceiling{(\alpha^{1/3}-\alpha^{2/3})n/2}}{\ceiling{t/2}}$.
Recall that $\delta(V_0, U_i) \geq (\alpha^{1/3} - \alpha^{2/3})n/2$ for $i=1,2$ by Claim \ref{bounds} and suppose that there is no $K_{\ceiling{t/2},\ell}$ with $\ceiling{t/2}$ vertices in $U_1$ and $\ell$ vertices in $V_0$.  We count the $\ceiling{t/2}$-stars from $V_0$ to $U_1$ in two ways which gives $$|V_0| \binom{\ceiling{(\alpha^{1/3} - \alpha^{2/3})n/2}}{\ceiling{t/2}} < \ell \binom{|U_1|}{\ceiling{t/2}}$$
contradicting the lower bound for $|V_0|$. Consequently there is a complete bipartite graph $K'=K_{\ceiling{t/2},\ell}$ with $\ceiling{t/2}$ vertices in $U_1$ and $\ell$ vertices in $V_0$. If there is no $K_{\floor{t/2},s}$ with $s$ vertices in $V(K') \cap V_0$ and $\floor{t/2}$ vertices in $U_2$, then a similar counting argument gives $$\ell \binom{\ceiling{(\alpha^{1/3} - \alpha^{2/3})n/2}}{\floor{t/2}} < s\binom{|U_2|}{\floor{t/2}}$$
contradicting the definition of $\ell$.

\noindent
\textbf{Case 2.} $|V_0|<  \ell \binom{|U_1|}{\ceiling{t/2}}/\binom{\ceiling{(\alpha^{1/3}-\alpha^{2/3})n/2}}{\ceiling{t/2}}$.  
By \eqref{ell}, we have
$$|V_0|< 
\ell\left(\frac{3(1+\alpha^{2/3})}{2(\alpha^{1/3}-\alpha^{2/3})}\right)^{\ceiling{t/2}}\leq s\left(\frac{3(1+\alpha^{2/3})}{2(\alpha^{1/3}-\alpha^{2/3})}\right)^{t}.$$  Let $p := \delta(\hat{U_1}, V_0)$, and note that $p\geq s$ by (\ref{V_0}).  We claim that there is a complete bipartite graph $K':= K_{\ceiling{t/2},p}$ with $\ceiling{t/2}$ vertices in $\hat{U_1}$ and $p$ vertices in $V_0$. Let $c$ be the number of $p$-stars with centers in $\hat{U_1}$ and leaves in $V_0$.  We have $c\geq |\hat{U_1}|\geq \frac{n}{8}$ and if no $p$-subset of $V_0$ is in $\ceiling{t/2}$ of such stars, i.e. $K'$ does not exist, we have $c\leq(\ceiling{t/2}-1)\binom{|V_0|}{p}$ which contradicts the fact that $|V_0|$ is $O(1)$ and $n$ is sufficiently large (with respect to $\alpha$, $t$, and consequently $|V_0|$). From (\ref{V_0}) we have $\delta(\hat{U_2},V_0)\geq|V_0|-p+s$, so every vertex $u \in  \hat{U_2}$ has at least $s$ neighbors in $V(K')\cap V_0$. Repeating the argument above by counting $s$-stars with centers in $\hat{U_2}$ and leaves in $V(K')\cap V_0$ gives $K'':=K_{s,\floor{t/2}}$.  Now choose $K^1\subseteq K'\cup K''$ having the property that $|V_0 \cap V(K^1)|=s$, $|U_1\cap V(K^1)|=\ceiling{t/2}$, and $|U_2\cap V(K^1)|=\floor{t/2}$ as desired.
\end{proof}

\subsection{Extremal Case}
\label{Extremal}

Recall that $t\geq 2s+1$, $n=(2k+1)(s+t)$ for some sufficiently large $k\in \mathbb{N}$, and $\delta(G)\geq \frac{n+3s}{2}-1=k(s+t)+2s+\frac{t}{2}-1$.  We start with the partition given in Section \ref{preprocess} and we call $U_0$ and $V_0$ the \emph{exceptional} sets. Let $i\in \{1,2\}$. We will attempt to update the partition by moving a constant number (depending only on $t$) of \emph{special} vertices between $U_1$ and $U_2$, denote them by $X$, and \emph{special} vertices between $V_1$ and $V_2$, denote them by $Y$, as well as partitioning the exceptional sets as $U_0=U_0^1\cup U_0^2$ and $V_0=V_0^1\cup V_0^2$.  Let $U_1^*$, $U_2^*$, $V_1^*$ and $V_2^*$ be the resulting sets after moving the special vertices. 
Our goal is to obtain two graphs, $G_1:=G[U_1^*\cup U_0^1, V_1^*\cup V_0^1]$ and $G_2:=[U_2^*\cup U_0^2, V_2^*\cup V_0^2]$ so that $G_1$ satisfies $$|U_1^*\cup U_0^1|=\ell_1(s+t)+as+bt, |V_1^*\cup V_0^1|=\ell_1(s+t)+bs+at$$ and $G_2$ satisfies $$|U_2^*\cup U_0^2|=\ell_2(s+t)+bs+at, |V_2^*\cup V_0^2|=\ell_2(s+t)+as+bt,$$ for some nonnegative integers $a,b,\ell_1,\ell_2$.  We tile $G_1$ as follows.  We find $a$ copies of $K_{s,t}$, each with $t$ vertices in $U_1^*$, so that each special vertex in $X\cap U_1^*$ is in a unique copy (some copies may not contain any special vertex). Also, we find $b$ copies of $K_{s,t}$, each with $t$ vertices in $V_1^*$ so that each special vertex in $Y\cap V_1^*$ is in a unique copy (some copies may not contain any special vertex).  Note that we only move vertices which will make this step possible. Deleting these $a+b$ copies of $K_{s,t}$ from $G_1$ gives us a balanced bipartite graph on $2\ell_1(s+t)$ vertices.  As noted in \cite{Z} and \cite{HS}, this graph can easily be tiled:  By Claim \ref{bounds} there are at most $\alpha^{2/3}\frac{n}{2}$ exceptional vertices in $U_0^1$ (resp.~$V_0^1$), each with degree at least $(\alpha^{1/3}-\alpha^{2/3})\frac{n}{2}$ to $V_1$ (resp.~$U_1$), so they may greedily be incorporated into unique copies of $K_{s+t,s+t}$.  The remaining graph is still balanced, divisible by $s+t$, and almost complete, thus can be tiled.

So if we are able to split $G$ into graphs $G_1$ and $G_2$ as detailed above, we will conclude that $G$ can be tiled.  However, if it is not possible to carry out this goal, then we will use an alternate method which is explained in Case 2.

\begin{proof}[Proof of Theorem 1.6]
There are two main cases.

\noindent
\textbf{Case 1.} $\max\{|U_1|,|U_2|,|V_1|,|V_2|\}\geq k(s+t)+\frac{t+1}{2}$.  Without loss of generality, suppose $|U_1|=\max\{|U_1|,|U_2|,|V_1|,|V_2|\}$. 

\textbf{Case 1.1.} $|V_2\cup V_0|\geq k(s+t)+s$.  We apply Claim \ref{Stars} to $G[U_1, V_2]$ with $A=V_2$ and $B=U_1$ to obtain $|U_1|-(k(s+t)+s)$ vertex disjoint $s$-stars with centers $C_U\subseteq U_1$ and leaves in $V_2$ and a set of $\max\{0, |V_2|-(k(s+t)+s)\}$ vertex disjoint $s$-stars with centers $C_V\subseteq V_2$ and leaves in $U_1$.  We move the vertices in $C_U$ to $U_2$ and the vertices in $C_V$ to $V_1$.  If $|V_2|<k(s+t)+s$, we choose $V_0'\subseteq V_0$ so that $|(V_2\cup V_0)\setminus V_0')|=k(s+t)+s$ otherwise we set $V_0'=\emptyset$. Then $G_1:=G[U_1\setminus C_U, V_1\cup C_V\cup V_0']$ satisfies $$|U_1|-|C_U|=k(s+t)+s, |V_1|+|V_0'|+|C_V|=k(s+t)+t,$$ and $G_2:=G-G_1$ satisfies $$|U_2\cup U_0|+|C_U|=k(s+t)+t, |V_2|+|V_0\setminus V_0'|-|C_V|=k(s+t)+s.$$  Thus $G_1$ and $G_2$ can be tiled, which completes the tiling of $G$.

\textbf{Case 1.2.} $|V_2\cup V_0|<k(s+t)+s$.  

This implies $|V_1|> k(s+t)+t$.  So we apply Claim \ref{Stars} to $G[V_1,U_2]$ with $A=U_2$ and $B=V_1$ to obtain a set of $|V_1|-k(s+t)$ vertex disjoint $s$-stars with centers $C_V\subseteq V_1$ and leaves in $U_2$. Likewise we apply Claim \ref{Stars} to $G[U_1,V_2]$ with $A=V_2$ and $B=U_1$ to obtain a set of $|U_1|-k(s+t)$ vertex $s$-stars with centers $C_U\subseteq U_1$ and leaves in $V_2$.  We move the vertices in $C_U$ to $U_2$ and the vertices in $C_V$ to $V_2$.  Then $G_1:=G[U_1\setminus C_U, V_1\setminus C_V]$ satisfies $$|U_1|-|C_U|=k(s+t), |V_1|-|C_V|=k(s+t)$$ and $G_2:=G-G_1$ satisfies $$|U_2\cup U_0|+|C_U|=(k+1)(s+t), |V_2\cup V_0|+|C_V|=(k+1)(s+t).$$  Thus $G_1$ and $G_2$ can be tiled, which completes the tiling of $G$.

\noindent
\textbf{Case 2.} $\max\{|U_1|,|U_2|,|V_1|,|V_2|\}\leq k(s+t)+\frac{t}{2}$. Note that this implies $|U_0|, |V_0|\geq s$. 

\textbf{Case 2.1.} $\max\{|\tilde{U}_1|, |\tilde{U}_2|, |\tilde{V}_1|, |\tilde{V}_2|\}\geq \frac{n}{4}$ (see Definition \ref{tildehat}).  Without loss of generality we can assume $|\tilde{U}_1|=\max\{|\tilde{U}_1|, |\tilde{U}_2|, |\tilde{V}_1|, |\tilde{V}_2|\}$.    
Set $h:=\ceiling{t/(2s)}$. Since $|\tilde{U}_1|>\frac{n}{4}$ and $\frac{1}{8s\alpha^{1/3}}\geq (h-1)(s+t)$, we can apply Claim \ref{TildeStars} to $G[\tilde{U}_1, V_2]$ with $c=0$ to obtain a set of $(h-1)(s+t)$ vertex disjoint $s$-stars with centers $C_U\subseteq \tilde{U}_1$ and leaves in $V_2$. We first move the vertices in $C_U$ from $\tilde{U}_1$ to $U_2$.  Then since $$\frac{t}{2}=s\frac{t}{2s}\leq sh\leq s\frac{t+2s-1}{2s}=\frac{t}{2}+s-\frac{1}{2},$$ we can choose sets $U_0'\subseteq U_0$ with $|U_0'|=k(s+t)+\floor{t/2}-|U_1|+sh-\floor{t/2}$ and $V_0'\subseteq V_0$ with $|V_0'|=k(s+t)+\floor{t/2}-|V_1|+s+\ceiling{t/2}-sh$ so that $G_1:=G[(U_1\cup U_0')\setminus C_U, V_1\cup V_0']$ satisfies $$|U_1|+|U_0'|-|C_U|=(k-h+1)(s+t)+hs, |V_1|+|V_0'|=(k-h+1)(s+t)+ht,$$
and $G_2:=G-G_1$ satisfies $$|U_2|+|U_0\setminus U_0'|+|C_U|=k(s+t)+ht, |V_2|+|V_0\setminus V_0'|= k(s+t)+hs.$$  Thus $G_1$ and $G_2$ can be tiled, which completes the tiling of $G$.

\textbf{Case 2.2.} $\max\{|\tilde{U}_1|, |\tilde{U}_2|, |\tilde{V}_1|, |\tilde{V}_2|\}< \frac{n}{4}$.  Thus for $i=1,2$, we have $$|\hat{U_i}|, |\hat{V_i}|\geq (1-\alpha^{2/3})\frac{n}{2}-\frac{n}{4}\geq \frac{n}{8}.$$  So we may apply Claim \ref{K_1} to obtain the two special copies of $K_{s,t}$, $K^1$ and $K^2$.  Note that $|U_i\setminus V(K^1)|$, $|V_i\setminus V(K^2)|\leq k(s+t)$ for $i=1,2$.  Let $U_0'=U_0\setminus V(K^2)$ and $V_0'=V_0\setminus V(K^1)$.  We remove the graphs $K^1$ and $K^2$, then we partition the vertices $U_0'=U_0^1\cup U_0^2$ and $V_0'=V_0^1\cup V_0^2$ so that $G_1:=G[(U_1\cup U_0^1)\setminus V(K^1), (V_1\cup V_0^1)\setminus V(K^2)]$ satisfies $$|U_1|-\ceiling{t/2}+|U_0^1|=k(s+t), |V_1|-\ceiling{t/2}+|V_0^1|=k(s+t)$$ and $G_2=G-G_1-K^1-K^2$ satisfies $$|U_2|-\floor{t/2}+|U_0^2|=k(s+t), |V_2|-\floor{t/2}+|V_0^2|=k(s+t).$$  Thus $G_1$ and $G_2$ can be tiled, so along with $K^1$ and $K^2$, this completes the tiling of $G$.

\end{proof}

\section{Tightness}
\label{lower bound}

In this section we will prove Proposition \ref{counterexample}.  We will need to use the graphs $P(m,p)$, where $m,p\in\mathbb{N}$, introduced by Zhao in \cite{Z}.

\begin{lemma}\label{No K22}

For all $p\in\mathbb{N}$ there exists $m_0$ such that for all $m\in \mathbb{N}$, $m>m_0$, there exists a balanced bipartite graph, $P(m,p)$, on $2m$ vertices, so that the following hold:
\begin{enumerate}
\item $P(m,p)$ is $p$-regular

\item $P(m,p)$ does not contain a copy of $K_{2,2}$.
\end{enumerate}

\end{lemma}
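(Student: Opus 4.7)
The plan is to construct $P(m,p)$ by exhibiting a small family of explicit $p$-regular $K_{2,2}$-free bipartite building blocks and then combining them by disjoint unions to obtain graphs on exactly $2m$ vertices for every sufficiently large $m$.

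First I would take the incidence graph (Levi graph) of a finite projective plane $PG(2,q)$ for a prime power $q\ge p-1$. This is a bipartite graph with $q^2+q+1$ ``points'' on one side and $q^2+q+1$ ``lines'' on the other, is $(q+1)$-regular, and contains no $K_{2,2}$: any two distinct points lie on a unique common line, so no two vertices on the point side share two common neighbours (and dually for lines). Being $(q+1)$-regular and bipartite, by K\"onig's edge-colouring theorem it decomposes into $q+1$ edge-disjoint perfect matchings; deleting any $q+1-p$ of these matchings yields a $p$-regular spanning subgraph $H_q$ on $2(q^2+q+1)$ vertices that inherits $K_{2,2}$-freeness, since removing edges can only destroy copies of $K_{2,2}$.

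Next, to hit every sufficiently large order exactly, I would pick two prime powers $q_1,q_2\ge p-1$ whose values $N_i:=q_i^2+q_i+1$ satisfy $\gcd(N_1,N_2)=1$. By the Sylvester--Frobenius (``coin problem'') theorem, every integer $m$ past a threshold depending only on $N_1$ and $N_2$ can be written as $m=aN_1+bN_2$ with $a,b\in\mathbb{Z}_{\ge 0}$, and I would then set $P(m,p)$ to be the disjoint union of $a$ copies of $H_{q_1}$ and $b$ copies of $H_{q_2}$. Since $K_{2,2}$ is connected, such a disjoint union remains $K_{2,2}$-free, and both regularity and bipartition balance are preserved componentwise, so $P(m,p)$ has both required properties on $2m$ vertices.

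The main obstacle is showing that two such coprime $N_i$'s exist for every $p$. For small $p$ this is immediate (e.g., $q_1=2$, $q_2=3$ give $N_1=7$, $N_2=13$), and for large $p$ one observes that any prime $\ell\ne 3$ dividing $N_q=q^2+q+1$ must satisfy $\ell\equiv 1\pmod 3$ and must force $q$ to have multiplicative order exactly $3$ modulo $\ell$, confining $q$ to only two residue classes mod $\ell$; hence one can find infinitely many prime powers whose $N$-values are pairwise coprime in pairs. If one prefers to sidestep even this, using three building blocks $H_{q_1},H_{q_2},H_{q_3}$ only requires $\gcd(N_1,N_2,N_3)=1$, which is strictly easier to arrange and still suffices to represent all large $m$ by Sylvester--Frobenius in three variables.
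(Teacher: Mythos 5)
Your construction is correct, but it takes a genuinely different route from the paper, which in fact gives no proof of this lemma at all: it simply imports the graphs $P(m,p)$ from Zhao \cite{Z}, where they are constructed directly for every sufficiently large $m$ by a difference-set (Sidon-type) construction --- both sides indexed by $\mathbb{Z}_m$, with $u_i$ joined to $v_{i+a_1},\dots,v_{i+a_p}$ for a fixed set $\{a_1,\dots,a_p\}$ whose pairwise differences are distinct modulo $m$ --- so that $K_{2,2}$-freeness follows from uniqueness of differences and no assembling of blocks is needed. You instead take the incidence graph of $PG(2,q)$ for a prime power $q\ge p-1$ (correctly observed to be $(q+1)$-regular and $K_{2,2}$-free), thin it to a $p$-regular spanning subgraph via K\"onig's $1$-factorization of regular bipartite graphs, and then hit every large order $2m$ by a Sylvester--Frobenius argument on the block orders $N_q=q^2+q+1$; each of these steps is sound, since subgraphs and disjoint unions inherit $K_{2,2}$-freeness and regularity/balance are preserved componentwise. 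The one place you are looser than you should be is the existence, for every $p$, of prime powers $q_1,q_2\ge p-1$ with $\gcd(N_{q_1},N_{q_2})=1$: your observation that a prime $\ell\ne 3$ divides $N_q$ only if $q$ has order $3$ modulo $\ell$ (so $\ell\equiv 1\pmod 3$ and $q$ lies in one of two residue classes mod $\ell$) is the right idea, but the conclusion ``hence infinitely many pairwise coprime values exist'' deserves an explicit sentence, e.g.\ take $q_1=3^a\ge p-1$ (so $3\nmid N_{q_1}$) and then, by CRT and Dirichlet, a prime $q_2\ge p-1$ with $q_2\equiv 1\pmod{\ell}$ for every prime $\ell\mid N_{q_1}$, which gives $N_{q_2}\equiv 3\pmod{\ell}$ and hence $\gcd(N_{q_1},N_{q_2})=1$. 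With that sentence added your argument is complete and self-contained; the trade-off is that Zhao's construction avoids both the factorization step and the arithmetic (a Sidon set such as $\{2^0,\dots,2^{p-1}\}$ works for all $m>2^p$ at once), while yours buys an argument built only from classical objects (projective planes, K\"onig, the coin problem) at the cost of the gluing and a little number theory.
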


\begin{proof}[Proof of Proposition \ref{counterexample}]

Let $G[U, V]$ be a balanced bipartite graph on $2n$ vertices satisfying the following conditions. Let $n=(2k+1)(s+t)$ for some sufficiently large $k$ (as determined by Lemma \ref{No K22} with $p=s-1$).  Partition $U$ into $U=U_0\cup U_1\cup U_2$ and partition $V$ into $V=V_0\cup V_1\cup V_2$ where, $|U_1|=|V_2|=k(s+t)+\floor{\frac{t+1}{2}}$, $|V_1|=|U_2|=k(s+t)+\ceiling{\frac{t+1}{2}}$ and $|U_0|=|V_0|=s-1$.  Let $G[U_i,V_i]$ be complete for $i\in\{1,2\}$, $G[U_1,V_2]\cong P\left(k(s+t)+\floor{\frac{t+1}{2}},s-1\right)$ and $G[U_2,V_1]\cong P\left(k(s+t)+\ceiling{\frac{t+1}{2}},s-1\right)$.  Let $G[U_0,V_1\cup V_2]$ be complete, $G[V_0,U_1\cup U_2]$ be complete and $G[U_0,V_0]$ be empty.  Note that 
$$\delta(G)= \left\lbrace \begin{array}{ll}\frac{n+3s}{2}-\frac{3}{2}   & \text{ if } t \text{ is odd } \\
              \frac{n+3s}{2}-2 & \text{ if } t \text{ is even. } \end{array} \right. $$
Finally we reiterate the following properties of $G[U_1, V_2]$ and $G[U_2, V_1]$.  For $i=1,2$,
\begin{equation}\label{s-1}
\Delta(U_i,V_{3-i})=\Delta(V_i, U_{3-i})=s-1
\end{equation}
and
\begin{equation}\label{K_{2,2}}
G[U_i, V_{3-i}] \text{ is } K_{2,2}\text{-free}.
\end{equation}

For $i\in\{1,2\}$ and $A\in \{U_i,V_i\}$, let $A^D:=V_{3-i}$ if $A=U_i$ and let $A^D:=U_{3-i}$ if $A=V_i$.  We call $A^D$ the \emph{diagonal set of $A$}.  Let $A^N:=V_i$ if $A=U_i$ and $A^N:=U_i$ if $A=V_i$.  We call $A^N$ the \emph{non-diagonal set of $A$}. Finally, we let $A^M:=V_0$ if $A=U_i$ and $A^M:=U_0$ if $A=V_i$.  We call $A^M$ the \emph{opposite middle set of $A$}.

Suppose $K\cong K_{s,t}$ is a subgraph of $G$. We say $K$ is a \emph{crossing $K_{s,t}$} if $V(K)\cap(U_1\cup V_1)\neq \emptyset$ and $V(K)\cap(U_2\cup V_2)\neq \emptyset$.  Let $\mathcal{W}=\{U_1, U_2, V_1, V_2\}$.  

\begin{claim}\label{claim:properties}
If $K$ is a crossing $K_{s,t}$, then 
\begin{enumerate}
\item $V(K)$ must intersect some member of $\mathcal{W}$ in exactly one vertex, and

\item there is a unique $A_0\in \{U_0, V_0\}$ such that $V(K)\cap A_0\neq \emptyset$.
\end{enumerate}
Furthermore, if $|V(K)\cap A|=1$ for some $A\in \mathcal{W}$, then 
\begin{enumerate}[resume]
\item $V(K)\cap A^D\neq \emptyset$, and

\item either $|V(K)\cap A^N|\geq 2$ and $V(K)\cap (A^N)^D=\emptyset$, or $V(K)\cap A^N=\emptyset$ and $|V(K)\cap (A^N)^D|\geq 2$.

\end{enumerate}

\end{claim}

\begin{proof}
\begin{enumerate}
\item Suppose not. Then without loss of generality, suppose that $|V(K)\cap V_1|\geq 2$. By (\ref{K_{2,2}}) we have, $|V(K)\cap U_2|\leq 1$ and thus $V(K)\cap U_2=\emptyset$.  Since $K$ is crossing, we have $V(K)\cap V_2\neq \emptyset$ and thus $|V(K)\cap V_2|\geq 2$. By (\ref{K_{2,2}}) we have, $|V(K)\cap U_1|\leq 1$ and thus $V(K)\cap U_1=\emptyset$.  This is a contradiction, since $K\cong K_{s,t}$ and $|V(K)\cap U|\leq |U_0|=s-1$.

\item Suppose first that $V(K)\cap U_0=\emptyset=V(K)\cap V_0$.  By Claim \ref{claim:properties} (i), we can assume without loss of generality that $|V(K)\cap U_1|=1$.  Then either $|V(K)\cap U_2|=t-1$ or $|V(K)\cap U_2|=s-1$.  If $|V(K)\cap U_2|=t-1$, then by (\ref{s-1}) we must have $V(K)\cap V_1=\emptyset$ which implies $|V(K)\cap V_2|=s$, contradicting (\ref{s-1}).  If $|V(K)\cap U_2|=s-1$, then since $t\geq 2s+1$ we have $|V(K)\cap V_1|\geq s+1$ or $|V(K)\cap V_2|\geq s+1$, both of which contradict (\ref{s-1}).  Thus there exists $A_0\in \{U_0, V_0\}$ such that $V(K)\cap A_0\neq \emptyset$.  Finally since $G[U_0, V_0]$ is empty, $A_0$ must be unique.

\item Suppose that $V(K)\cap A^D=\emptyset$. Since $|V_0|=s-1$, we have $V(K)\cap A^N\neq \emptyset$ and since $K$ is crossing, we have $V(K)\cap (A^N)^D\neq \emptyset$.  Then by (\ref{s-1}), we have $|V(K)\cap A^N|, |V(K)\cap (A^N)^D|\leq s-1$.  Thus $|V(K)\cap U|\leq 2s-1$ and $|V(K)\cap V|\leq 2s-2$, contradicting the fact that $K\cong K_{s,t}$ and $t\geq 2s+1$.

\item We first show that it is not possible for either $|V(K)\cap A^N|=1$ or $|V(K)\cap (A^N)^D|=1$. If $|V(K)\cap A^N|=1$, then by (\ref{s-1}) and $|U_0|=|V_0|=s-1$, we have $|V(K)\cap U|, |V(K)\cap V|\leq 2s-1$, contradicting the fact that $K\cong K_{s,t}$ and $t\geq 2s+1$.  So suppose $|V(K)\cap (A^N)^D|=1$.  If $V(K)\cap U_0=\emptyset$, then $|V(K)\cap U|=2$ and since $t\geq 3$ we must have $s=2$. Then by (\ref{s-1}) we have $|V(K)\cap V|\leq 3$ contradicting the fact that $K\cong K_{s,t}$ and $t\geq 2s+1$.  If $V(K)\cap U_0\neq \emptyset$, then $V(K)\cap V_0=\emptyset$.  So $|V(K)\cap U|\leq s+1$ and by (\ref{s-1}), $|V(K)\cap V|\leq 2s-2$ contradicting the fact that $K\cong K_{s,t}$ and $t\geq 2s+1$.

Now suppose $V(K)\cap A^N\neq\emptyset$ and $V(K)\cap (A^N)^D\neq\emptyset$. Thus, by the previous paragraph we have $|V(K)\cap A^N|, |V(K)\cap (A^N)^D|\geq 2$, contradicting (\ref{K_{2,2}}).

So suppose that $V(K)\cap A^N=\emptyset=V(K)\cap (A^N)^D$.  Then it must be the case that $|V(K)\cap (A^N)^M|=s-1$ and consequently $|V(K)\cap A^D|=t$, contradicting (\ref{s-1}).

\end{enumerate}

\end{proof}

Let $A\in\mathcal{W}$.  We say $K$ is \emph{crossing from $A$} if either $|V(K)\cap A|=1$ and $|V(K)\cap A^D|\geq 2$, or $|V(K)\cap A|=1$, $|V(K)\cap A^D|=1$ and $V(K)\cap A^M\neq \emptyset$.  We say that a crossing $K_{s,t}$ from $A$ is \emph{Type 1} if $|V(K)\cap (A^N)^M|=s-1$, $|V(K)\cap A^N|=t-p$ and $|V(K)\cap A^D|=p$ for some $2\leq p\leq s-1$.  We say that a crossing $K_{s,t}$ from $A$ is \emph{Type 2} if $|V(K)\cap (A^N)^D|=t-1$, $|V(K)\cap A^M|=s-p$, and $|V(K)\cap A^D|=p$ for some $1\leq p\leq s-1$.

\begin{figure}[ht]
\centering
\subfloat[(Type 1 crossing $K_{s,t}$ from $A$)]{
\input{Type1.pstex_t}
\label{Type1}
}~~
\subfloat[(Type 2 crossing $K_{s,t}$ from $A$)]{
\input{Type2.pstex_t}
\label{Type2}
}
\label{typefig}
\caption[]{}
\end{figure}

\begin{claim}\label{types}
Every crossing $K_{s,t}$ is either Type 1 or Type 2.
\end{claim}

\begin{proof} (See Figure 1) Let $K$ be a crossing $K_{s,t}$ and without loss of generality suppose $K$ is crossing from $U_1$.  Let $p:=|V(K)\cap V_2|$. By Claim \ref{claim:properties} (iii) and (\ref{s-1}) we have $1\leq p\leq s-1$.  Suppose $K$ is not Type 1.  If $V(K)\cap U_2=\emptyset$, then $|V(K)\cap U_0|=s-1$ which implies $V(K)\cap V_0=\emptyset$ by Claim \ref{claim:properties} (ii).  Since $K$ is not Type 1, it must be the case that $|V(K)\cap V_2|=1$ and $|V(K)\cap V_1|=t-1$ in which case $K$ is not crossing from $U_1$, contradicting our assumption.  So we suppose that $V(K)\cap U_2\neq \emptyset$.  By Claim \ref{claim:properties} (iv) we have $|V(K)\cap U_2|\geq 2$ and $V(K)\cap V_1=\emptyset$, which implies that $|V(K)\cap V_0|=s-p$.  So by Claim \ref{claim:properties} (ii), we have $V(K)\cap U_0=\emptyset$ and thus $|V(K)\cap U_2|=t-1$, so $K$ is Type 2.

\end{proof}

Suppose for a contradiction that $G$ can be tiled with $K_{s,t}$.  Let $\mathcal{F}$ be a tiling of $G$ which minimizes the number of crossing $K_{s,t}$'s.

\begin{figure}[ht]
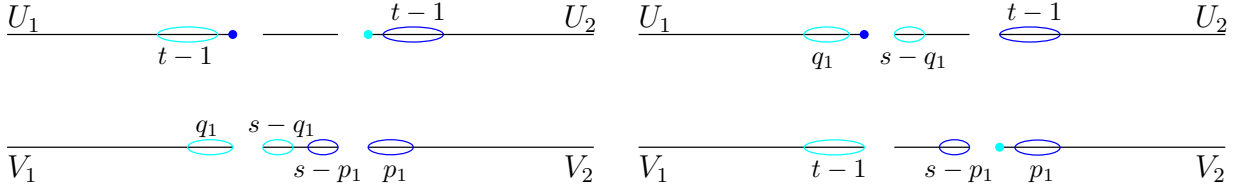

\centering
\subfloat{
\input{Case2a.pstex_t}
\label{fig22a}
}~~
\subfloat{
\input{Case2b.pstex_t}
\label{fig22b}
}
\label{forbfig}
\caption[]{Two cases in the proof of Claim \ref{forbidden}. 
}
\end{figure}

\begin{claim}\label{forbidden}
For $i=1,2$, if there is a crossing $K_{s,t}$ of Type $2$ from $U_i$ or $V_i$, then there is no crossing $K_{s,t}$ of Type $2$ from $U_{3-i}$ or $V_{3-i}$.
\end{claim}

\begin{proof}
Without loss of generality suppose $K^1$ is a crossing $K_{s,t}$ of Type $2$ from $U_1$.  Suppose that $K^2$ is a crossing $K_{s,t}$ of Type $2$ from $U_2$ (See Figure 2). For $i\in\{1,2\}$, let 
$$K^i_*:=G[U_i\cap (V(K^1)\cup V(K^2)), V(K^{3-i})\cap (V_0\cup V_i)].$$  
We have $K^1_*\cong K_{s,t} \cong K^2_*$, neither of $K^1_*,K^2_*$ are crossing, and $V(K^1)\cup V(K^2)=V(K^1_*)\cup V(K^2_*)$. Thus we obtain a tiling with fewer crossing $K_{s,t}$'s, contradicting the minimality of $\mathcal{F}$.

Now, suppose $K^1$ is a crossing $K_{s,t}$ of Type $2$ from $U_1$ and $K^2$ is a crossing $K_{s,t}$ of Type $2$ from $V_2$ (See Figure 2). Specify an element $L^1\in\mathcal{F}$, such that $V(L^1)\subseteq U_1\cup V_1$ and $|V(L^1)\cap V_1|=t$ and specify an element $L^2\in\mathcal{F}$, such that $V(L^2)\subseteq U_2\cup V_2$ and $|V(L^2)\cap U_2|=t$.  Choose arbitrary vertices $v'\in V(K^1)\cap V_0$ and $u'\in V(K^2)\cap U_0$.  We now define four subgraphs of $G$.  Let
\begin{align*}
K^1_*:&=G[V(L^1)\cap V_1, (V(K^1)\cup V(K^2))\cap ((U_1\cup U_0)\setminus\{u'\})],\\
L^1_*:&=G[V(L^1)\cap U_1, (V(K^2)\cap V_1)\cup\{v'\}],\\
K^2_*:&=G[V(L^2)\cap U_2, (V(K^1)\cup V(K^2))\cap ((V_2\cup V_0)\setminus\{v'\})], \text{ and}\\
L^2_*:&=G[V(L^2)\cap V_1, (V(K^1)\cap U_2)\cup\{u'\}]. 
\end{align*}

All of $K^1_*,K^2_*,L^1_*,L^2_*$ are isomorphic to $K_{s,t}$, none of $K^1_*,K^2_*,L^1_*,L^2_*$ are crossing, and $V(K^1_*)\cup V(K^2_*)\cup V(L^1_*)\cup V(L^2_*)=V(K^1)\cup V(K^2)\cup V(L^1)\cup V(L^2)$. Thus we obtain a tiling with fewer crossing $K_{s,t}$'s, contradicting the minimality of $\mathcal{F}$.
\end{proof}

For $i\in\{1,2\}$, let $\mathcal{F}_i$ be the set of all copies of $K_{s,t}$ in $\mathcal{F}$ which touch $U_i\cup V_i$. And let $U_i^*$ (resp.~$V_i^*$) be all the vertices in $U$ (resp.~$V$) which touch elements of $\mathcal{F}_i$.  Precisely, let $\mathcal{F}_i=\{K\in \mathcal{F}: V(K)\cap(U_i\cup V_i)\neq\emptyset\}$ for $i=1,2$, and let 
\begin{align*}
U_i^*=\left(\cup_{K\in\mathcal{F}_i}V(K)\right)\cap U ~~\text{ and }~~ V_i^*=\left(\cup_{K\in\mathcal{F}_i}V(K)\right)\cap V.
\end{align*}
Note that $U_i\subseteq U_i^*$ and $V_i\subseteq V_i^*$.  We will use the following claim to show that all of the remaining possible configurations of crossing $K_{s,t}$'s lead to contradictions.

\begin{claim}\label{claim:main}
For all $i\in\{1,2\}$, either $$\max\{|U_i^*|, |V_i^*|\}\geq k(s+t)+2t ~\text{ or }~ \min\{|U_i^*|, |V_i^*|\}\geq (k+1)(s+t).$$
\end{claim}

\begin{proof}
Suppose that $\max\{|U_i^*|, |V_i^*|\}< k(s+t)+2t$.  Then since $U_i\subseteq U_i^*$ and $V_i\subseteq V_i^*$, we have \begin{equation}\label{bounded} k(s+t)+s<|U_i^*|, |V_i^*|<k(s+t)+2t,\end{equation} and thus \begin{equation}\label{2t-s}||U_i^*|-|V_i^*||<2t-s.\end{equation}  By definition $G[U_i^*, V_i^*]$ can be tiled, thus there exists nonnegative integers $\ell, a, b$ such that $|U_i^*|=\ell(s+t)+as+bt$ and $|V_i^*|=\ell(s+t)+at+bs$.  By choosing $\ell$ to be maximal, we have $a=0$ or $b=0$.  
If $\ell\leq k-1$, then in order to satisfy the lower bound in \eqref{bounded} we must have $a\geq 3$ or $b\geq 3$.  Since $a=0$ or $b=0$, we have $||U_i^*|-|V_i^*||\geq 3t-3s\geq 2t-s$, which contradicts \eqref{2t-s}.  If $\ell=k$, then in order to satisfy the lower bound in \eqref{bounded}, we must have $a\geq 2$ or $b\geq 2$, but then we violate the upper bound.  So $\ell\geq k+1$ and we have $\min\{|U_i^*|, |V_i^*|\}\geq (k+1)(s+t)$.
\end{proof}

We will also use the following facts.  For $i=1,2$, we have
\begin{equation}\label{eq:upper}
|V_i\cup V_0|+s, |U_i\cup U_0|+s\leq k(s+t)+\frac{t+2}{2}+2s-1<(k+1)(s+t).
\end{equation}
which in particular implies
\begin{equation}\label{eq:upper2t}
|V_i\cup V_0|+t, |U_i\cup U_0|+t<k(s+t)+2t.
\end{equation}

\begin{figure}[ht]
\centering
\subfloat[(Case 1.0)]{
\input{Case1-0.pstex_t}
\label{fig10}
}~~
\subfloat[(Case 1.1.i)]{
\input{Case1ai.pstex_t}
\label{fig11a}
}\\
\subfloat[(Case 1.1.ii)]{
\input{Case1aii.pstex_t}
\label{fig11b}
}~~
\subfloat[(Case 1.2.i)]{
\input{Case1bi.pstex_t}
\label{fig12a}
}\\
\subfloat[(Case 1.2.ii)]{
\input{Case1bii.pstex_t}
\label{fig12b}
}
\label{fig1}
\caption[]{Case 1}
\end{figure}

Let $i\in\{1,2\}$ and let $X_i=\{K\in\mathcal{F}:K \text{ is crossing from } U_i \text{ and } K \text{ is Type } 2\}$ and $Y_i=\{K\in\mathcal{F}:K \text{ is crossing from } V_i \text{ and } K \text{ is Type } 2\}$.  Since $|U_0|=|V_0|=s-1$, Claim \ref{claim:properties} (ii) implies, \begin{equation}\label{X_iY_i}0\leq|X_i|,|Y_i|\leq s-1. \end{equation}

\noindent
\textbf{Case 0.} There are no crossing $K_{s,t}$'s.  So $|U_1^*|\leq |U_1\cup U_0|$ and $|V_1^*|\leq |V_1\cup V_0|$.  Then by (\ref{eq:upper}) we have $|U_1^*|, |V_1^*|<(k+1)(s+t)$, contradicting Claim \ref{claim:main}.

\noindent
\textbf{Case 1.} There is a crossing $K_{s,t}$ of Type $1$.  Without loss of generality, suppose $K^1$ is a crossing $K_{s,t}$ of Type $1$ from $U_1$ and let $p:=|V(K^1)\cap V_2|$.  Since $U_0\setminus V(K^1)=\emptyset$, there can be no other crossing $K_{s,t}$'s of Type 1 from $U_1$ or $U_2$ and no crossing $K_{s,t}$'s of Type 2 from $V_1$ or $V_2$. By Claim \ref{types}, we must only consider five subcases:

\textbf{Case 1.0.} $K^1$ is the only crossing $K_{s,t}$. So $|U_1^*|\leq |U_1\cup U_0|$ and $|V_1^*|\leq |V_1\cup V_0|+p< |V_1\cup V_0|+s$.  Then by (\ref{eq:upper}) we have $|U_1^*|, |V_1^*|<(k+1)(s+t)$, contradicting Claim \ref{claim:main}.

\textbf{Case 1.1.i.} There is a crossing $K_{s,t}$ of Type $1$ from $V_1$. Let $K^2$ be a crossing $K_{s,t}$ from $V_1$ and let $q:=|V(K^2)\cap U_2|$.  Since $V_0\setminus V(K^2)=\emptyset$, $K^1$ and $K^2$ are the only crossing $K_{s,t}$'s.  So $|U_1^*|\leq |U_1\cup U_0|+q< |U_1\cup U_0|+s$ and $|V_1^*|\leq |V_1\cup V_0|+p< |V_1\cup V_0|+s$.  Then by (\ref{eq:upper}) we have, $|U_1^*|, |V_1^*|<(k+1)(s+t)$, contradicting Claim \ref{claim:main}.

\textbf{Case 1.1.ii.} There is a crossing $K_{s,t}$ of Type $1$ from $V_2$. Let $K^2$ be a crossing $K_{s,t}$ from $V_2$ and let $q:=|V(K^2)\cap U_1|$.  Since $V_0\setminus V(K^2)=\emptyset$, $K^1$ and $K^2$ are the only crossing $K_{s,t}$'s.  So $|V_1^*|\leq |V_1\cup V_0|+p+1\leq |V_1\cup V_0|+s$ and $|U_1^*|\leq |U_1\cup U_0|+t-q<|U_1\cup U_0|+t$.  Then by (\ref{eq:upper}) and (\ref{eq:upper2t}) we have $|V_1^*|<(k+1)(s+t)$ and $|U_1^*|<k(s+t)+2t$, contradicting Claim \ref{claim:main}.

\textbf{Case 1.2.i.} $1\leq |X_1|$. By Claim \ref{forbidden}, since there exists a crossing $K_{s,t}$ of Type $2$ from $U_1$, there can be no crossing $K_{s,t}$'s of Type $2$ from $U_2$.  So $|U_2^*|\leq |U_2\cup U_0|+|X_1|+1\leq |U_2\cup U_0|+s$ and $|V_2^*|\leq |V_2\cup V_0|+t-p<|V_2\cup V_0|+t$.  Then by (\ref{eq:upper}) and (\ref{eq:upper2t}) we have $|U_2^*|<(k+1)(s+t)$ and $|V_2^*|<k(s+t)+2t$, contradicting Claim \ref{claim:main}.

\textbf{Case 1.2.ii.} $1\leq |X_2|$. By Claim \ref{forbidden}, since there exists a crossing $K_{s,t}$ of Type $2$ from $U_2$, then there can be no crossing $K_{s,t}$'s of Type $2$ from $U_1$.  So $|U_1^*|\leq |U_1\cup U_0|+|X_2|<|U_1\cup U_0|+s$ and $|V_1^*|\leq |V_1\cup V_0|+p<|V_1\cup V_0|+s$.  Then by (\ref{eq:upper}) we have $|U_1^*|, |V_1^*|<(k+1)(s+t)$, contradicting Claim \ref{claim:main}.

\begin{figure}[ht]
\centering
\subfloat{
\input{Case2d.pstex_t}
\label{fig20}
}~~
\subfloat{
\input{Case2c.pstex_t}
\label{fig22}
}
\label{fig2}
\caption[]{Case 2
}
\end{figure}

\noindent
\textbf{Case 2.} There are no crossing $K_{s,t}$'s of Type $1$.  By Claim  \ref{types}, there can only be crossing $K_{s,t}$'s of Type $2$.  Without loss of generality suppose that $1\leq |X_1|$.  Then there can be no crossing $K_{s,t}$ of Type $2$ from $U_2$ or $V_2$. So $|U_2^*|\leq |U_2\cup U_0|+|X_1|<|U_2\cup U_0|+s$ and $|V_2^*|\leq |V_2\cup V_0|+|Y_1|<|V_2\cup V_0|+s$.  Then by (\ref{eq:upper}) we have $|U_2^*|, |V_2^*|<(k+1)(s+t)$, contradicting Claim \ref{claim:main}.

\end{proof}

\subsection*{Acknowledgements}

We thank the very careful referees for their suggestions which improved the presentation of this paper.

\end{document}